\documentclass[11pt]{amsart}
\usepackage{amsmath,amsthm,amsfonts,amssymb,mathtools}
\usepackage{enumerate}
\usepackage[colorlinks=true, linkcolor=blue, citecolor=magenta, menucolor=black]{hyperref}
\usepackage{graphicx}
\usepackage[up]{caption}
\usepackage{subcaption}
\usepackage{pgf,tikz}
\usepackage[toc,page]{appendix}
\usetikzlibrary{arrows}
\usetikzlibrary{patterns}
\usepackage{color}
\usepackage{comment}
\numberwithin{equation}{section}
\newcommand{\R}{\mathbb{R}}
\newcommand{\C}{\mathbb{C}}
\newcommand{\Z}{\mathbb{Z}}
\newcommand{\Q}{\mathbb{Q}}

\newcommand{\Aut}{\operatorname{Aut}}

\newcommand{\Prob}{\operatorname{Prob}}

\newcommand{\PSL}{\operatorname{PSL}}
\newcommand{\SL}{\operatorname{SL}}

\newcommand{\PGL}{\operatorname{PGL}}

\newcommand{\rL}{\operatorname{L}}

\newcommand{\rB}{\operatorname{B}}
\newcommand{\rC}{\operatorname{C}}
\newcommand{\rE}{\operatorname{E}}

\newcommand{\rk}{\operatorname{rk}}
\newcommand{\Fix}{\operatorname{Fix}}

\newcommand{\Ad}{\operatorname{Ad}}
\newcommand{\supp}{\operatorname{supp}}

\newtheorem{theorem}{Theorem}[section]
\newtheorem{maintheorem}{Theorem}

\newtheorem{maincorollary}{Corollary}

\newtheorem{mainobstruction}{Theorem}

\newtheorem{proposition}[theorem]{Proposition}
\newtheorem{lemma}[theorem]{Lemma}
\newtheorem{corollary}[theorem]{Corollary}

\theoremstyle{definition}

\newtheorem{remark}[theorem]{Remark}

\newtheorem{example}[theorem]{Example}

\begin{document}
	
	\title[The noncommutative topological factor theorem]{The noncommutative topological factor theorem for rank-one product lattices}
	
	\author{Cyril Houdayer}
	\author{Corentin Le Bars}
	
	\address{\'Ecole normale sup\'erieure \\ D\'epartement de math\'ematiques et applications \\ Universit\'e Paris-Saclay \\ 45 rue d'Ulm \\ 75230 Paris Cedex 05 \\ France}
	\email{cyril.houdayer@ens.psl.eu}
	
	\address{\'Ecole normale sup\'erieure \\ D\'epartement de math\'ematiques et applications \\ 45 rue d'Ulm \\ 75230 Paris Cedex 05 \\ France}
	\email{corentin.le.bars@math.ens.psl.eu}
	
	\thanks{Research supported by ERC Advanced Grant NET 101141693.}

	\subjclass[2020]{46L55, 22D25, 22E40, 37A55}
	
	\keywords{Intermediate \(\rC^*\)-subalgebras, reduced crossed products, Furstenberg boundaries, irreducible lattices, rank-one groups, tree boundaries, Dani's factor theorem, ITAP}

	\begin{abstract}
		We prove a noncommutative topological factor theorem for irreducible lattices in products of real rank-one simple Lie groups. The intermediate \(\rC^*\)-subalgebras between the reduced group \(\rC^*\)-algebra and the boundary crossed product are exactly the crossed products arising from coordinate subproducts of the Furstenberg boundary. This follows from a more general theorem for product boundary actions, which also yields tree and mixed local-field versions. We finally show that the corresponding classification for the full flag action of \(\SL_3(\Z)\) would imply ordinary {\rm ITAP}.
	\end{abstract}
	
	\maketitle
	
	\section{Introduction}
	
	A major achievement in the theory of discrete subgroups of semisimple Lie
	groups is Margulis's measurable factor theorem. Let $\Gamma<G$ be an
	irreducible lattice in a higher-rank semisimple Lie group with finite center and
	no compact factors, and let $P<G$ be a minimal parabolic subgroup. Margulis's
	theorem asserts that every measurable $\Gamma$-equivariant factor map from
	$G/P$ is, up to isomorphism, a canonical projection $G/P\to G/Q$ for an
	intermediate parabolic subgroup $P\subset Q\subset G$
	\cite[Theorem~IV.2.11(a)]{Ma91}.

	The factor theorem is a key ingredient in Margulis's celebrated normal subgroup
	theorem. Applied to a noncentral normal subgroup, it shows that the corresponding
	quotient is amenable. The complementary part of Margulis's argument shows that
	the same quotient has property~{\rm (T)} and is therefore finite. Since the
	center of the ambient group is finite, every normal subgroup of the lattice is
	either finite or of finite index \cite[Theorem~IV.4.9]{Ma91}.

	The Boutonnet--Houdayer noncommutative factor theorem gives a von Neumann
	algebraic version of Margulis's factor theorem
	\cite[Theorem~B and Theorem~3.3]{BH23}. It shows that every
	von Neumann algebra lying between the group von Neumann algebra of the
	quotient of the lattice by its center and the group measure space von Neumann
	algebra of its Furstenberg boundary comes from a unique intermediate parabolic
	quotient. More precisely, in the algebraic setting of \cite{BH23}, the base-two
	logarithm of the number of intermediate von Neumann algebras is the sum
	\(\sum_i\rk_{k_i}(\mathbf G_i)\) of the local ranks of the ambient algebraic
	factors. Thus this sum is an invariant of
	the inclusion. For connections between this theorem and Connes'\! rigidity
	conjecture for group von Neumann algebras of higher-rank lattices, we refer to
	the first author's ICM survey
	\cite[Section~5]{Ho23}.

	Dani obtained the corresponding factor theorem in the topological category. He
	proved that every continuous $\Gamma$-equivariant factor map from $G/P$ onto a
	compact Hausdorff $\Gamma$-space is, up to equivariant homeomorphism, one of the
	same parabolic projections $G/P\to G/Q$ \cite[Theorem~A]{Da84}. Thus, in both
	Margulis's and Dani's theorems, equivariance under the irreducible lattice alone
	forces the factor to come from the ambient group. For products of rank-one
	groups, these factors are precisely the coordinate subproducts of the boundary.
	Motivated by Dani's theorem and the Boutonnet--Houdayer
	noncommutative factor theorem, we classify, for rank-one product lattices, the
	intermediate \(\rC^*\)-subalgebras between the reduced group \(\rC^*\)-algebra and
	the boundary crossed product.
	Our dynamical framework also applies to tree boundaries and mixed products over
	local fields.
	
	Let \(n\geq2\), put \(N=\{1,\ldots,n\}\), and let \(2^N\) denote the power set
	of \(N\). We order \(2^N\) and all intervals of \(\rC^*\)-subalgebras by inclusion.
	Let
	\(G=G_1\times\cdots\times G_n\) be a product of second countable locally
	compact groups. Suppose that \(G_i\) acts continuously on an infinite compact
	metrizable space \(X_i\). We say that \(G_i\curvearrowright X_i\) is a
	\emph{rank-one boundary action} if \(G_i\) acts transitively on the ordered pairs
	of distinct points of \(X_i\) and contains an element with north--south
	dynamics. For \(S\subset N\), write \(X_S=\prod_{i\in S}X_i\), with
	\(X_\varnothing\) a point, and let \(p_i:G\to G_i\) be the coordinate
	projection. Denote by \(K_i\) the kernel of \(G_i\curvearrowright X_i\).

	Recall that an action of a discrete group \(\Lambda\curvearrowright X\) is
	\emph{topologically free} if \(\Fix_X(s)\) has empty interior for every
	\(s\in\Lambda\setminus\{e\}\).
	
	For a lattice \(\Gamma<G\), put
	\(\mathcal A_S=\rC(X_S)\rtimes_r\Gamma\). Thus
	\(\mathcal A_\varnothing=\rC^*_{\lambda}(\Gamma)\) and
	\(\mathcal A_N=\rC(X_N)\rtimes_r\Gamma\). The canonical expectation is denoted
	by \(\rE:\mathcal A_N\to\rC(X_N)\), and \(\rE(au_\gamma^*)\) is the
	\(\gamma\)-th Fourier coefficient of \(a\).
	
	\begin{maintheorem}\label{thm:classification}
		Assume that every \(G_i\curvearrowright X_i\) is a rank-one boundary action and that \(\Gamma<G\) is a lattice satisfying the following conditions.
		\begin{enumerate}[\rm (i)]
			\item The image of \(\Gamma\) in \(G_i/K_i\) is dense for every \(i\in N\).
			\item The discrete group \(\Gamma\) has the Haagerup--Kraus approximation property {\rm(AP)}.
			\item The coordinate action \(\Gamma\curvearrowright X_i\) is topologically free for every \(i\in N\).
		\end{enumerate}
		Then
		\[
		2^N\longrightarrow
		\left\{\mathcal D\mid
		\rC^*_{\lambda}(\Gamma)\subset\mathcal D\subset
		\rC(X_N)\rtimes_r\Gamma\right\},
		\qquad
		S\longmapsto\rC(X_S)\rtimes_r\Gamma,
		\]
		is an order isomorphism. More precisely, if \(\mathcal D=\mathcal A_S\) is the intermediate \(\rC^*\)-subalgebra corresponding to \(S\), then
		\[
		\rC^*(\rE(\mathcal D))
		=\mathcal D\cap\rC(X_N)
		=\rC(X_S).
		\]
	\end{maintheorem}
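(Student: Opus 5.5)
The plan is to split the argument into three steps: a dynamical classification of intermediate invariant subalgebras of \(\rC(X_N)\), a passage from \(\mathcal D\) to its Fourier coefficients, and a reconstruction of \(\mathcal D\) from its diagonal part.

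\emph{Step 1: the commutative layer.} Every \(\Gamma\)-invariant unital \(\rC^*\)-subalgebra \(\mathcal B\subset\rC(X_N)\) should equal \(\rC(X_S)\) for a unique \(S\subset N\). Such a \(\mathcal B\) corresponds to a \(\Gamma\)-equivariant factor \(X_N\to Y\). Using density (i), the first task is to upgrade \(\Gamma\)-equivariance to \(G\)-equivariance, in the spirit of Dani's theorem. Consider the closed equivalence relation \(R\subset X_N\times X_N\) defining \(Y\); it is \(\Gamma\)-invariant. I would apply north--south elements of each factor, approximated by elements of \(\Gamma\) through (i), to pairs in \(R\). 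Double transitivity together with north--south contraction should show the following: if \(R\) contains a pair differing in coordinate \(i\), then \(R\) contains all pairs differing only in coordinate \(i\). Closedness and transitivity of the relation then make \(R\) the relation "equal on the coordinates in \(S\)", where \(S\) is the set of coordinates that are never identified. I expect this to be a routine but careful minimality and proximality computation.

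\emph{Step 2: Fourier coefficients stay in \(\mathcal D\).} Set \(\mathcal B=\rC^*(\rE(\mathcal D))\). It is \(\Gamma\)-invariant because \(\mathcal D\) contains the unitaries \(u_\gamma\) and \(\rE\) is equivariant. By Step 1, \(\mathcal B=\rC(X_S)\) for some \(S\). The key claim is \(\rE(\mathcal D)\subset\mathcal D\), and more precisely \(\rE(au_\gamma^*)\in\mathcal D\) for all \(a\in\mathcal D\). This is the main obstacle, and it is where topological freeness (iii) and the boundary dynamics enter.

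My first attempt follows the Amrutam--Kalantar / Suzuki style. Given \(a\in\mathcal D\) and \(\gamma\neq e\), I want to kill the off-diagonal terms by averaging with conjugations by group elements, \(\frac1m\sum_k u_{g_k}au_{g_k}^*\). The elements \(g_k\in\Gamma\) are chosen, via (i) and north--south dynamics in each coordinate, to move the supports of the off-diagonal coefficients onto regions where they vanish.

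The difficulty is that these averages do not preserve the diagonal part \(\rE(a)\), since it gets translated. So instead I would aim to show that \(\mathcal D\) contains \(\rE(a)\cdot f\) for suitable \(f\in\rC(X_S)\), or to use the simplicity of \(\rC^*_\lambda(\Gamma)\) together with a "boundary averaging" argument giving \(\overline{\mathrm{conv}}\{u_gau_g^*\}\ni\) an element of \(\rC(X_N)\cap\mathcal D\). This averaging is only approximate for finitely supported \(a\).

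\emph{Step 3: reconstruction via (AP).} Once Step 2 yields \(\rE(\mathcal D u_\gamma^*)\subset \mathcal D\cap\rC(X_N)=\rC(X_S)\), the approximation property supplies the remaining inclusion. By Suzuki / Zacharias, (AP) gives the slice-map property: any \(x\in\rC(X_N)\rtimes_r\Gamma\) all of whose Fourier coefficients lie in \(\rC(X_S)\) belongs to \(\rC(X_S)\rtimes_r\Gamma\). Hence \(\mathcal D\subset\mathcal A_S\). Conversely, \(\rC(X_S)=\mathcal B\subset\mathcal D\) provided the coefficients themselves lie in \(\mathcal D\), which is what Step 2 gives; so \(\mathcal A_S\subset\mathcal D\).

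Finally, \(\mathcal D\cap\rC(X_N)=\rC(X_S)\) follows because \(\rE\) is the identity on diagonal elements. Injectivity and monotonicity of \(S\mapsto\mathcal A_S\) hold since \(\rE(\mathcal A_S)=\rC(X_S)\), and the \(\rC(X_S)\) are distinct because each \(X_i\) is infinite.
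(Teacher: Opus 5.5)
Your overall architecture (commutative factor theorem, Suzuki's reconstruction, an averaging argument) matches the paper's. However, the proof has a genuine gap: Step 2, which carries the lower bound, is left open. You correctly see that conjugation averages translate the diagonal part, but you give no mechanism that produces elements of $\mathcal D\cap\rC(X_N)$. Moreover, the target $\rE(\mathcal D)\subset\mathcal D$ is stronger than what one can reach directly.

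The missing idea is to recover a \emph{coordinate slice} of $\rE(d)$ rather than $\rE(d)$ itself. If $\xi\in X_k$ has trivial $\Gamma$-stabilizer, then $\sigma_{k,\xi}(\rE(d))\in\mathcal D$, where $\sigma_{k,\xi}$ freezes the $k$-th coordinate at $\xi$. The proof runs as follows.
\begin{enumerate}[\rm (a)]
\item Approximate $d$ by $f+\sum_{t\in F}f_tu_t$. Choose a neighborhood $O$ of $\xi$ with $p_k(t)\overline O\cap\overline O=\varnothing$ for $t\in F$; this is where the trivial stabilizer is used.
\item Average $m^{-1}\sum_i u_{s_i}du_{s_i}^*$ over $s_i\in\Gamma$ such that $p_{N\setminus\{k\}}(s_i)$ is close to the identity and $p_k(s_i)$ maps $X_k\setminus O$ into pairwise disjoint open sets $V_1,\dots,V_m$.
\item The diagonal part is translated only negligibly in the coordinates other than $k$. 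In coordinate $k$ it is collapsed to its value at $\xi$, except on at most one $V_i$.
\item The off-diagonal terms $(s_i\cdot f_t)u_{s_its_i^{-1}}$ are killed by Powers' estimate $2\|f_t\|/\sqrt m$. Use the partition $C=\{\gamma\mid p_k(\gamma)\xi\in O\}$, $D=\Gamma\setminus C$: one has $tC\cap C=\varnothing$, and the sets $s_iD$ are pairwise disjoint.
\item Topological freeness and Baire give a dense set of such $\xi$.
\end{enumerate}
You then do not need $\rE(\mathcal D)\subset\mathcal D$. Apply your Step 1 to $\mathcal D\cap\rC(X_N)=\rC(X_T)$. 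The upper bound gives $T\subset S$; it needs no Step 2, since $\rE(du_\gamma^*)\in\rE(\mathcal D)\subset\rC(X_S)$ by definition of $S$. For the reverse inclusion, take $j\in S$ and $f\in\rE(\mathcal D)$ depending on coordinate $j$. Pick $k\neq j$ (this is where $n\geq2$ enters) and a free $\xi$ near which $f$ still separates two points differing only in coordinate $j$. The slice lies in $\rC(X_T)$ and depends on coordinate $j$, so $j\in T$. Hence $\rC(X_S)\subset\mathcal D$, and therefore $\mathcal A_S\subset\mathcal D$.

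Both the elements $s_i$ above and the elements your Step 1 needs require \emph{simultaneous} control of several coordinates: near the identity in some, contracting in others. Density (i) of each projection separately cannot provide this. As described, your Step 1 never uses that $\Gamma$ is a lattice, and without that hypothesis its conclusion is false. Take a dense subgroup of $G_1$ embedded diagonally in $G_1\times G_1$. It satisfies (i), yet the flip-invariant functions on $X_1\times X_1$ form a $\Gamma$-invariant unital subalgebra that is not of the form $\rC(X_S)$.

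The paper obtains joint control from Poincar\'e recurrence of $a=(e_I,(a_j)_{j\in J})$ on the finite-measure space $G/\Gamma$. If $a^m\pi(B)\cap\pi(B)\neq\varnothing$ for a small identity neighborhood $B$, then $\gamma=b_2^{-1}a^mb_1\in\Gamma$ satisfies $p_I(\gamma)\in B_I^{-1}B_I$, while $p_j(\gamma)$ contracts for $j\in J$. In Step 1, this is what purifies a related pair into one differing only in coordinate $i$. Density (i) and double transitivity are used only afterwards, to move the $i$-th coordinates of that purified pair.
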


	Theorem~\ref{thm:classification} is the dynamical mechanism underlying our noncommutative version of
	Dani's theorem. For real rank-one flag manifolds, the Bruhat decomposition and
	regular elements in maximal split tori give the required boundary dynamics.
	Irreducibility, connectedness and center-freeness give effective coordinate
	density and faithful coordinate actions. Real analyticity then gives
	topological freeness. Finally, weak amenability gives {\rm (AP)}. Thus,
	Theorem~\ref{thm:classification} yields the following corollary.
	
	\begin{maincorollary}\label{cor:dani-real-rank-one-direct}
		Let \(G=G_1\times\cdots\times G_n\), where \(n\geq2\) and every \(G_i\)
		is a connected center-free noncompact simple real Lie group
		of real rank one. Let \(P_i<G_i\) be a minimal parabolic subgroup and put
		\(X_i=G_i/P_i\). Let \(\Gamma<G\) be an irreducible lattice, meaning that its
		projection to every nonempty proper subproduct is dense. Then
		\[
		2^N\longrightarrow
		\left\{\mathcal D\mid
		\rC^*_{\lambda}(\Gamma)\subset\mathcal D\subset
		\rC(X_N)\rtimes_r\Gamma\right\},
		\qquad
		S\longmapsto\rC(X_S)\rtimes_r\Gamma,
		\]
		is an order isomorphism.
	\end{maincorollary}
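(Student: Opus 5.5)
The plan is to deduce Corollary~\ref{cor:dani-real-rank-one-direct} from Theorem~\ref{thm:classification} by checking its standing hypothesis and conditions (i)--(iii) for \(X_i=G_i/P_i\).

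\emph{Rank-one boundary action.} Since \(\rk_\R(G_i)=1\), the Bruhat decomposition reads \(G_i=P_i\sqcup P_iw P_i\). Hence \(P_i\) acts transitively on \(X_i\setminus\{P_i\}\), so \(G_i\) is transitive on ordered pairs of distinct points. The space \(X_i\) is an infinite compact manifold (a sphere), since \(G_i\) is noncompact. For north--south dynamics, take a regular element \(a\) of a maximal split torus \(A_i\subset P_i\) that contracts the opposite unipotent radical \(\bar N_i\). The big cell \(\bar N_iP_i/P_i\) is open and dense and equals \(X_i\setminus\{wP_i\}\). Powers of \(a\) push it to the attracting point \(P_i\), uniformly on compacta, and \(a^{-1}\) does the same toward \(wP_i\). \emph{Kernel.} \(K_i\) is a normal subgroup contained in \(P_i\), hence a proper normal subgroup of the simple group \(G_i\). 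It is therefore central, so \(K_i=\{e\}\) because \(G_i\) is center-free. Thus \(G_i/K_i=G_i\) acts faithfully.

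\emph{Condition (i).} Irreducibility gives density of \(\Gamma\) in every proper subproduct. Since \(n\ge2\), the singleton \(\{i\}\) is a proper subproduct, so \(p_i(\Gamma)\) is dense in \(G_i=G_i/K_i\).

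\emph{Condition (iii)} is the step I expect to need the most care. Let \(\gamma\in\Gamma\setminus\{e\}\) with \(\Fix_{X_i}(\gamma)\) having nonempty interior. The action of \(G_i\) on \(X_i\) is real analytic and \(X_i\) is connected. Hence \(\gamma\) fixes all of \(X_i\), by analytic continuation of the identity \(\gamma x=x\) from an open set, so \(p_i(\gamma)\in K_i=\{e\}\). To conclude we need \(p_i\) to be injective on \(\Gamma\), i.e. \(\Gamma\cap\ker p_i=\{e\}\). Set \(\Delta=\Gamma\cap\prod_{j\ne i}G_j\). This is a discrete subgroup normalized by \(\Gamma\). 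Since \(\Gamma\) projects densely into \(\prod_{j\ne i}G_j\), the normalizer of \(\Delta\) in that product is closed and contains the dense image, so \(\Delta\) is normal there. By discreteness, the connected group \(\prod_{j\neq i}G_j\) centralizes \(\Delta\). Then \(\Delta\) lies in the center, which is trivial because the \(G_j\) are center-free. So \(\Gamma\) acts faithfully and topologically freely on each \(X_i\).

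\emph{Condition (ii).} Each \(G_i\) is a simple real rank-one Lie group, hence weakly amenable with finite Cowling--Haagerup constant (Cowling--Haagerup, Hansen). Products of weakly amenable groups are weakly amenable, and lattices inherit weak amenability with the same constant. Weak amenability of a discrete group implies (AP) (Haagerup--Kraus), so \(\Gamma\) has (AP); no property such as (T) of the factors is needed here. With all hypotheses verified, Theorem~\ref{thm:classification} applies and gives the order isomorphism \(S\mapsto \rC(X_S)\rtimes_r\Gamma\).
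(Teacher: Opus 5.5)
Your proposal is correct and follows the paper's proof essentially step by step. The ingredients match: the rank-one Bruhat decomposition and a regular element of the split torus give the rank-one boundary action; the trivial kernel \(K_i\) gives faithfulness; irreducibility gives coordinate density; injectivity of \(p_i|_\Gamma\) comes from the closed normalizer, connectedness and center-freeness; the real-analytic identity principle gives topological freeness; and weak amenability gives (AP). The differences are cosmetic and equally valid: you spell out why \(K_i\) is trivial, and you pass weak amenability (rather than (AP)) to the finite product and to the lattice before invoking Haagerup--Kraus.
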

	
	Corollary~\ref{cor:dani-real-rank-one-direct} is the Lie-theoretic specialization of
	Theorem~\ref{thm:classification} highlighted in the
	title. The \(\rC^*\)-subalgebras in this interval, including both endpoints, are
	indexed by the subsets of the simple factors.
	Therefore, the cardinality of the interval determines the real rank of the
	ambient group. This is the reduced crossed-product analogue of the
	rank-recovery statement in the Boutonnet--Houdayer noncommutative factor
	theorem. The corollary is proved after
	Theorem~\ref{thm:classification} in
	Section~\ref{sec:proof}.
	Section~\ref{sec:applications} gives further applications to products of trees
	and mixed products over local fields. In the mixed \(S\)-arithmetic examples
	one must pass to the effective
	adjoint groups: a common finite central kernel can create additional intermediate
	\(\rC^*\)-subalgebras. In the tree case, a fixed-point criterion of
	Bader--Boutonnet--Houdayer--Peterson makes topological freeness automatic for
	uniform lattices in products of thick biregular trees with injective coordinate
	projections \cite[Proposition~6.4]{BBHP22}. For the simple irreducible
	Burger--Mozes lattices, this gives an explicit classification consisting of four
	intermediate \(\rC^*\)-subalgebras \cite{BM00b}.
	
	\subsection*{Relation to intermediate crossed-product correspondences}
	Intermediate \(\rC^*\)-subalgebras of crossed-product inclusions have been studied
	through several complementary methods. For groups with {\rm(AP)}, Suzuki proved
	a lattice correspondence between intermediate extensions and intermediate
	crossed products under a freeness assumption, and Ochi weakened that assumption
	\cite{Su20,Oc21}. Reconstruction under different group-theoretic hypotheses was
	obtained by Amrutam \cite{Am21}. Stationary states and Powers averaging have also
	been used to prove simplicity results for intermediate \(\rC^*\)-subalgebras
	\cite{AK20,AU22},
	while boundary maps and noncommutative boundaries give related results on
	simplicity and ideal structure for crossed products \cite{KS22,KS19}. The factor
	map \(X_N\to X_\varnothing\) satisfies neither Suzuki's freeness assumption nor
	Ochi's singleton-fiber condition over the nonfree part, so
	Theorem~\ref{thm:classification} is not a formal consequence of those
	correspondence theorems. Its new coordinate rigidity comes from combining the
	product-boundary factor theorem with the coordinate-slicing lemma. This
	identifies both \(\rC^*(\rE(\mathcal D))\) and
	\(\mathcal D\cap\rC(X_N)\), and it shows that every \(\rC^*\)-subalgebra in the
	interval is the crossed product by one of these coordinate \(\rC^*\)-algebras.

	Here is the strategy.
	\begin{enumerate}[\rm (i)]
		\item Recurrence and north--south dynamics yield an abstract topological
		factor theorem. Every unital \(\Gamma\)-invariant \(\rC^*\)-subalgebra of
		\(\rC(X_N)\) is a coordinate \(\rC^*\)-algebra \(\rC(X_S)\). Applied to an
		intermediate \(\rC^*\)-subalgebra \(\mathcal D\), this identifies
		\(\rC^*(\rE(\mathcal D))\) and \(\mathcal D\cap\rC(X_N)\) as coordinate
		\(\rC^*\)-algebras, possibly for different coordinate sets.
		\item The approximation property and Suzuki's Fourier reconstruction theorem
		give the upper bound \(\mathcal D\subset\mathcal A_S\), where
		\(\rC^*(\rE(\mathcal D))=\rC(X_S)\).
		\item Poincar\'e recurrence and Powers averaging recover coordinate slices of
		\(\rE(d)\) inside \(\mathcal D\). Topological freeness allows the slices to
		be based at points with trivial stabilizer. These slices show that the two
		coordinate sets agree and yield the reverse inclusion
		\(\mathcal A_S\subset\mathcal D\).
	\end{enumerate}
	
	Beyond the approximation-property setting, even the scalar-expectation case presents a separate obstruction: it is controlled exactly by the invariant translation approximation property. We record the resulting concrete obstruction for \(\SL_3(\Z)\).
	
	\begin{mainobstruction}\label{thm:sl3-itap-obstruction}
		Let \(G=\SL_3(\R)\), \(\Gamma=\SL_3(\Z)\), let \(P<G\) be a minimal parabolic subgroup, put \(\mathcal A=\rC(G/P)\rtimes_r\Gamma\), and let \(\rE:\mathcal A\to\rC(G/P)\) be the canonical expectation. Write
		\(\rC_u^*(\Gamma)=\ell^\infty(\Gamma)\rtimes_r\Gamma\) for the uniform Roe
		\(\rC^*\)-algebra and \(\rL(\Gamma)\) for the group von Neumann algebra. The following are
		equivalent.
		\begin{enumerate}[\rm (i)]
			\item \(\Gamma\) has ordinary {\rm ITAP}, that is,
			\[
			\rC_u^*(\Gamma)\cap\rL(\Gamma)=\rC^*_{\lambda}(\Gamma).
			\]
			\item Every intermediate \(\rC^*\)-subalgebra
			\(\rC^*_{\lambda}(\Gamma)\subset\mathcal D\subset\mathcal A\) with
		\(\rC^*(\rE(\mathcal D))=\C1\) is equal to
			\(\rC^*_{\lambda}(\Gamma)\).
		\end{enumerate}
		Consequently, if every intermediate \(\rC^*\)-subalgebra is of the form \(\rC(G/Q)\rtimes_r\Gamma\) for a parabolic subgroup \(P\subset Q\subset G\), then \(\Gamma\) has {\rm ITAP}.
	\end{mainobstruction}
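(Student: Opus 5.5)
The plan is to reduce both conditions to one comparison inside \(B(\ell^2\Gamma)\) between \(\mathcal A\) and \(\rC_u^*(\Gamma)\). Put \(X=G/P\) and
\[
\mathcal S=\{a\in\mathcal A\mid \rE(au_\gamma^*)\in\C1\text{ for all }\gamma\in\Gamma\}.
\]

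\emph{Step 1: \(\mathcal S\) is the largest algebra in (ii).} I will check that \(\mathcal S\) is a norm-closed \(*\)-subalgebra of \(\mathcal A\) containing \(\rC^*_\lambda(\Gamma)\).

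\begin{itemize}
\item It is norm closed because \(\rE\) is contractive.
\item It is closed under adjoints because \(\rE((a^*)u_\gamma^*)=\sigma_\gamma(\rE(au_{\gamma^{-1}}^*))^*\).
\item It is closed under products because the Fourier coefficients of \(ab\) are norm-convergent combinations of \(\sigma_h\)-translates of products of coefficients of \(a\) and \(b\). Scalars are \(\sigma\)-invariant, so these coefficients stay scalar. I will verify this first for finitely supported \(b\) and then pass to the limit.
\end{itemize}

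Every \(\mathcal D\) in (ii) lies in \(\mathcal S\), since \(\rE(du_\gamma^*)=\rE(d\,u_\gamma^*)\in\rE(\mathcal D)\subset\C1\). Conversely \(\mathcal S\) itself satisfies \(\rC^*(\rE(\mathcal S))=\C1\). Hence (ii) is equivalent to \(\mathcal S=\rC^*_\lambda(\Gamma)\).

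\emph{Step 2: orbit embedding.} Fix \(x\in X\). Its \(\Gamma\)-orbit is dense, since \(\Gamma\) acts minimally on \(G/P\) by Moore ergodicity, or simply because \(\Gamma\) is Zariski dense. The orbit map
\[
\iota_x:\rC(X)\to\ell^\infty(\Gamma),\qquad \iota_x(f)(g)=f(gx),
\]
is then an injective \(\Gamma\)-equivariant unital \(*\)-homomorphism. It therefore induces an injective \(*\)-homomorphism
\[
\pi_x=\iota_x\rtimes_r\Gamma:\mathcal A\to\rC_u^*(\Gamma)
\]
that intertwines the canonical expectations. In particular \(\pi_x\) preserves Fourier coefficients modulo \(\iota_x\).

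An element of \(\ell^\infty(\Gamma)\rtimes_r\Gamma\) with scalar Fourier coefficients commutes with the right regular representation, so it lies in \(\rL(\Gamma)\). Conversely, elements of \(\rL(\Gamma)\) have scalar coefficients. Hence
\[
\pi_x(\mathcal S)=\pi_x(\mathcal A)\cap\rL(\Gamma)\subset\rC_u^*(\Gamma)\cap\rL(\Gamma).
\]

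This already gives (i)\(\Rightarrow\)(ii). Under ITAP, every \(s\in\mathcal S\) has \(\pi_x(s)=b\) for some \(b\in\rC^*_\lambda(\Gamma)\). Then \(s\) and \(b\) have the same Fourier coefficients, so \(s=b\) by faithfulness of \(\rE\).

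\emph{Step 3: (ii)\(\Rightarrow\)(i).} It suffices to show
\[
\rC_u^*(\Gamma)\cap\rL(\Gamma)\subset\pi_x(\mathcal A).
\]
Granting this, each \(T\in\rC_u^*(\Gamma)\cap\rL(\Gamma)\) equals \(\pi_x(s)\) with \(s\in\mathcal S\). By (ii), \(\mathcal S=\rC^*_\lambda(\Gamma)\), so \(T\in\rC^*_\lambda(\Gamma)\).

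I will use topological amenability of \(\Gamma\curvearrowright G/P\), which holds because point stabilizers are conjugates of \(\Gamma\cap P\) acting inside the amenable group \(P\). This gives continuous maps \(x\mapsto\mu_i^x\in\Prob(\Gamma)\), finitely supported and asymptotically equivariant, uniformly on compacta of \(\Gamma\). From these I will build ucp maps
\[
\psi_i:\ell^\infty(\Gamma)\to\rC(X),\qquad \psi_i(f)(y)=\textstyle\sum_g\mu_i^y(g)f(g),
\]
together with the associated ucp maps \(\Psi_i:\rC_u^*(\Gamma)\to\mathcal A\). The maps \(\Psi_i\) should be obtained as compressions by the isometries \(V_i\xi=(\mu_i^{\,\cdot})^{1/2}\otimes\xi\), following the Brodzki–Ozawa construction of Herz–Schur multipliers. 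Then \(\pi_x\circ\Psi_i\) agrees with the Herz–Schur multiplier
\[
m_{h_i},\qquad h_i(g,\gamma)=\langle(\mu_i^{\,\cdot})^{1/2},(\mu_i^{\,\cdot})^{1/2}\circ\gamma^{-1}\rangle,
\]
up to errors that tend to zero on finite-propagation elements. Since \(h_i\to1\) uniformly on propagation-bounded sets, \(\pi_x\Psi_i(T)\to T\) in norm. Closedness of \(\pi_x(\mathcal A)\) then gives \(T\in\pi_x(\mathcal A)\).

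This third step is the main obstacle. The delicate point is to make the identification \(\pi_x\circ\Psi_i\approx m_{h_i}\) precise, uniformly in \(T\). I will first prove it for finite-propagation \(T\), where only finitely many \(\gamma\) are involved. I will then extend by density, using that all maps involved are contractive. If the kernel description becomes awkward, the fallback is exactness of \(\Gamma\): \(\rC_u^*(\Gamma)\) is nuclear, and \(\Gamma\curvearrowright\beta\Gamma\) factors approximately through \(X\) via \(\psi_i\).

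Finally, the consequence follows from (i)\(\Leftrightarrow\)(ii). For \(P\subset Q\subsetneq G\), the algebra \(\rC(G/Q)\) is not \(\C1\), so \(\rC(G/Q)\rtimes_r\Gamma\) does not satisfy \(\rC^*(\rE(\cdot))=\C1\). For \(Q=G\), the algebra \(\rC(G/G)\rtimes_r\Gamma\) is \(\rC^*_\lambda(\Gamma)\). So the hypothesized classification forces (ii), and hence ITAP.
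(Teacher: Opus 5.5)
Your Steps 1 and 2 and the final consequence agree with the paper. Your $\mathcal S$ is the paper's $\mathcal D_0=\mathcal A\cap\rL(\Gamma)$, and you correctly note that (i)$\Rightarrow$(ii) needs no amenability. However, the mechanism you propose for Step 3 cannot work. You want $\pi_x\circ\Psi_i\approx m_{h_i}$ on finite-propagation elements of $\rC_u^*(\Gamma)$, followed by a density argument. For any maps $\Psi_i$ with values in $\mathcal A$ this approximation is false:
\begin{itemize}
\item $T=M_{\delta_e}$ has propagation zero, and $m_{h_i}(T)=T$ because $h_i(e,\cdot)=1$.
\item The diagonal expectation of $\rC_u^*(\Gamma)$ is contractive, fixes $T$, and maps $\pi_x(\mathcal A)$ into $\iota_x(\rC(X))$.
\item The orbit of $x$ is dense and $X$ has no isolated points, so $x$ is a limit of points $rx$ with $r\neq e$. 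Hence every $f\in\rC(X)$ satisfies $\|\iota_x(f)-\delta_e\|_\infty\geq1/2$.
\end{itemize}
So $\|\pi_x\Psi_i(T)-T\|\geq1/2$ for all $i$. More simply, maps into the proper closed subspace $\pi_x(\mathcal A)$ can never converge to the identity on all of $\rC_u^*(\Gamma)$. If you restrict the finite-propagation statement to $\rL(\Gamma)$, the density step becomes circular. The finite-propagation elements of $\rL(\Gamma)$ are exactly $\C[\Gamma]$, and approximating $T\in\rC_u^*(\Gamma)\cap\rL(\Gamma)$ by them is ITAP itself.

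The missing idea, which is how the paper argues, is to separate convergence from membership. Let $\Theta_i$ be the Schur multiplier on $\rB(\ell^2(\Gamma))$ with positive definite kernel $k_i(r,t)=h_i(rt^{-1},rx)$, where $h_i(s,y)=\langle\xi_i(y),\lambda_s\xi_i(s^{-1}y)\rangle$ and $\xi_i=(\mu_i^{\,\cdot})^{1/2}$.
\begin{itemize}
\item $\Theta_i$ is ucp on all of $\rB(\ell^2(\Gamma))$ and converges to the identity on $\rC_u^*(\Gamma)$. Here your finite-propagation-plus-density argument is legitimate, because nothing is required to lie in $\pi_x(\mathcal A)$.
\item Independently, suppose $T\in\rL(\Gamma)$ has entries $\langle T\delta_t,\delta_r\rangle=c_{rt^{-1}}$. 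Since $h_i(s,\cdot)\in\rC(X)$ vanishes for $s\notin F_iF_i^{-1}$, one gets \emph{exactly} $\Theta_i(T)=\pi_x\bigl(\sum_{s\in F_iF_i^{-1}}c_sh_i(s,\cdot)u_s\bigr)$.
\end{itemize}
Hence $T=\lim_i\Theta_i(T)\in\pi_x(\mathcal A)$. Your compression $\Psi_i$ can be rescued in this form: compare Fourier coefficients to show that $\pi_x\Psi_i=\Theta_i$ exactly on $\rL(\Gamma)$.

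Three smaller points.
\begin{itemize}
\item Topological amenability of $\Gamma\curvearrowright G/P$ does not follow from amenability of the $\Gamma$-stabilizers. Use instead that $G\curvearrowright G/P$ is amenable because $P$ is, and restrict to the closed subgroup $\Gamma$.
\item Zariski density does not give minimality, since Zariski dense Schottky subgroups have proper limit sets. One dense orbit suffices, and Moore ergodicity makes almost every orbit dense.
\item In Step 1, the limit over finitely supported $b$ loses the scalar-coefficient property of the approximants. Your Step 2 identity $\mathcal S=\pi_x^{-1}(\rL(\Gamma))$ gives the algebra structure directly.
\end{itemize}
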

	
	The final implication is one-way: {\rm ITAP} settles exactly the scalar upper-bound case and does not by itself imply the full noncommutative Dani classification. Theorem~\ref{thm:sl3-itap-obstruction} is proved independently of Theorem~\ref{thm:classification} in Section~\ref{sec:scalar-itap}.
	
	The main operator-algebraic input is Suzuki's slice-map theorem
	\cite[Proposition~3.4]{Su17}. For approximation properties we use
	\cite{CH89,HK94,Sz91}. The dynamical argument uses Poincar\'e recurrence and
	Powers' method \cite{Po75}.

	\subsection*{AI statement}
	ChatGPT~5.6 Sol was used at an exploratory stage to investigate the extension
	of the topological factor theorem to reduced crossed products and the relation
	between its scalar-expectation case and {\rm ITAP}. It was subsequently used to
	draft parts of the proofs and applications. The authors then
	substantially revised the arguments and presentation and take full responsibility
	for the final mathematical content of the paper.
	
	\section{The abstract product-boundary factor theorem}
	\label{sec:abstract-coordinate-factor}
	
	In this section we prove the commutative core of Theorem~\ref{thm:classification}.  We isolate a
	dynamical criterion that gives the coordinate form of Dani's topological factor
	theorem independently of \cite{Da84}.  The criterion applies uniformly to real
	rank-one boundaries, tree boundaries, and mixed products over local fields.  Its
	recurrence input will also drive the noncommutative Powers-averaging argument.
	
	Let $n\geq2$ and let $G=G_1\times\cdots\times G_n$, where every $G_i$ is a
	second countable locally compact group acting continuously
	on an infinite compact metrizable space $X_i$.  Put
	\[
	N=\{1,\ldots,n\},\qquad
	G_I=\prod_{i\in I}G_i,\qquad
	X_I=\prod_{i\in I}X_i
	\]
	for $I\subset N$, with $G_\varnothing=\{e\}$ and $X_\varnothing=\{*\}$, and denote by
	$p_I:G\to G_I$ and $\operatorname{pr}_I:X_N\to X_I$ the coordinate projections.
	We say that $G_i\curvearrowright X_i$ is a \emph{rank-one boundary action}
	if the following conditions hold.
	\begin{enumerate}[\rm (i)]
		\item The diagonal action of $G_i$ on
		$X_i^{(2)}=\{(\xi,\eta)\in X_i\times X_i\mid \xi\neq\eta\}$ is transitive.
		\item There is an element $a_i\in G_i$ with two distinct fixed points
		$a_i^-,a_i^+\in X_i$ such that, for every pair of neighborhoods
		$O^-\ni a_i^-$ and $V^+\ni a_i^+$, there is $m_0\geq1$ such that
		$a_i^m(X_i\setminus O^-)\subset V^+$ for every $m\geq m_0$. Such an element
		is called a \emph{north--south element}.
	\end{enumerate}
	The first condition implies that $G_i$ is transitive on $X_i$ and that $X_i$
	has no isolated points.  It also implies that, after conjugating a fixed
	north--south element, one obtains a north--south element with any prescribed
	ordered pair of distinct points as its repelling and attracting fixed points.
	
	Let $\Gamma<G$ be a lattice. For every $i\in N$, put
	$K_i=\ker(G_i\curvearrowright X_i)$ and assume that the image of $\Gamma$
	in the effective quotient $G_i/K_i$ is dense. For the rest of this section, we keep these
	assumptions on the actions and on $\Gamma$. Neither {\rm(AP)} nor topological
	freeness is needed until the noncommutative part of the proof.
	
	\subsection{Recurrence and simultaneous contraction}
	
	The following proposition is the only place in the coordinate-factor argument
	where the lattice hypothesis is used.  Its codimension-one special case is also
	the recurrence statement needed in Section~\ref{sec:proof}.
	
	\begin{proposition}
		\label{lem:abstract-recurrence-contraction}
		\label{prop:codim-one-strip}
		Let $\varnothing\neq I\subsetneq N$ and put $J=N\setminus I$.  Let
		$U\subset G_I$ be an identity neighborhood.  For every $j\in J$, let
		$L_j\subsetneq X_j$ be compact and let $V_j\subset X_j$ be nonempty and open.
		Then there exists $\gamma\in\Gamma$ such that
		\begin{equation}
			\label{eq:simultaneous-recurrence-contraction}
			p_I(\gamma)\in U
			\quad\text{and}\quad
			p_j(\gamma)L_j\subset V_j
			\quad\text{for every }j\in J.
		\end{equation}
		
		In particular, if $j\in N$, $U\subset G_{N\setminus\{j\}}$ is an identity
		neighborhood, and $O,V\subset X_j$ are nonempty open sets with
		$X_j\setminus O\neq\varnothing$, then there exists $\gamma\in\Gamma$ such that
		\begin{equation}
			\label{eq:codim-one-strip}
			p_{N\setminus\{j\}}(\gamma)\in U,
			\qquad
			p_j(\gamma)(X_j\setminus O)\subset V.
		\end{equation}
	\end{proposition}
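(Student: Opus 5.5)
The plan is to obtain $\gamma$ from Poincar\'e recurrence on the finite-measure space $G/\Gamma$, applied to a single element of $G$. That element is trivial in the $I$-coordinates and north--south in each $J$-coordinate. The density hypothesis (i) is not needed here; only the finiteness of the $G$-invariant measure on $G/\Gamma$ is used.

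\emph{Step 1: choosing the north--south elements.} Fix $j\in J$. Since $L_j\subsetneq X_j$, pick $\xi_j\in X_j\setminus L_j$. Since $X_j$ has no isolated points, $V_j$ is infinite, so we can pick $\eta_j\in V_j$ with $\eta_j\neq\xi_j$. By the remark after the definition (transitivity on $X_j^{(2)}$ and conjugating a fixed north--south element), there is a north--south element $a_j\in G_j$ with $a_j^-=\xi_j$ and $a_j^+=\eta_j$. Put
\[
h=(e_I,(a_j)_{j\in J})\in G.
\]

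\emph{Step 2: fixing neighborhoods before recurrence.} For each $j\in J$:
\begin{itemize}
\item choose an open $V_j^+\ni\eta_j$ whose closure is compact and contained in $V_j$;
\item choose an open $O_j^-\ni\xi_j$ and an open set $L_j'\supset L_j$ with $\overline{O_j^-}\cap\overline{L_j'}=\varnothing$ (possible since $L_j$ is compact and $\xi_j\notin L_j$).
\end{itemize}
By continuity of the action and compactness of $L_j$ and $\overline{V_j^+}$, there is an identity neighborhood $W_j\subset G_j$ such that $W_jL_j\subset L_j'$ and $W_j^{-1}\overline{V_j^+}\subset V_j$; this is a tube-lemma argument. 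Next choose a symmetric identity neighborhood $W\subset G$ with $p_j(W)\subset W_j$ for every $j\in J$ and $p_I(W)^{-1}p_I(W)\subset U$. Finally, the north--south property gives $m_0$ such that $a_j^m(X_j\setminus O_j^-)\subset V_j^+$ for all $m\geq m_0$ and all $j\in J$; since $J$ is finite, one $m_0$ works for every $j$.

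\emph{Step 3: recurrence.} The set $B=W\Gamma/\Gamma$ has positive measure in the finite $G$-invariant measure space $G/\Gamma$. Poincar\'e recurrence for the measure-preserving map $x\mapsto hx$ gives infinitely many $m\geq1$ with $h^mB\cap B\neq\varnothing$; take such an $m\geq m_0$. Then $h^mw_1\Gamma=w_2\Gamma$ for some $w_1,w_2\in W$, so
\[
\gamma:=w_2^{-1}h^mw_1\in\Gamma.
\]
In the $I$-coordinates, $p_I(\gamma)=p_I(w_2)^{-1}p_I(w_1)\in U$. For $j\in J$,
\[
p_j(\gamma)L_j=p_j(w_2)^{-1}a_j^m\,p_j(w_1)L_j .
\]
Here $p_j(w_1)L_j\subset L_j'\subset X_j\setminus O_j^-$, so applying $a_j^m$ lands in $V_j^+$, and then $p_j(w_2)^{-1}V_j^+\subset V_j$. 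This gives \eqref{eq:simultaneous-recurrence-contraction}.

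For the codimension-one case, take $I=N\setminus\{j\}$ and $L_j=X_j\setminus O$. This set is compact, and it is proper because $O\neq\varnothing$; the hypothesis $X_j\setminus O\neq\varnothing$ only ensures the statement is not vacuous.

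\emph{Main obstacle.} The only delicate point is the order of quantifiers. The neighborhoods $V_j^+$ and $O_j^-$, and then $W$, must be fixed before recurrence produces $m$. Recurrence controls only the size of $w_1,w_2$, not $m$, so the perturbations $w_1,w_2$ must be absorbed uniformly: the tube-lemma step does this for $w_1$ moving $L_j$ and for $w_2^{-1}$ moving $\overline{V_j^+}$. Infinitude of return times then lets us take $m\geq m_0$ at no extra cost.
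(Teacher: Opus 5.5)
Your proposal is correct and follows essentially the same route as the paper: the element $(e_I,(a_j)_{j\in J})$ with prescribed repelling and attracting points, neighborhoods and perturbation sets fixed by a tube-lemma argument before applying Poincar\'e recurrence on $G/\Gamma$, and then $\gamma=w_2^{-1}h^mw_1$ with $m\geq m_0$. Two small points you leave implicit, both noted in the paper's proof, are that $W\Gamma/\Gamma$ has positive measure because the invariant measure on $G/\Gamma$ has full support, and that $I=N\setminus\{j\}$ is nonempty in the codimension-one case because $n\geq2$.
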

	
	\begin{proof}
		Fix $j\in J$. Choose $\xi_j^-\in X_j\setminus L_j$ and
		$\xi_j^+\in V_j\setminus\{\xi_j^-\}$.
		The second choice is possible because $X_j$ has no isolated points.  By
		conjugating a north--south element, choose $a_j\in G_j$ whose repelling and
		attracting fixed points are respectively $\xi_j^-$ and $\xi_j^+$.  Choose open
		neighborhoods $O_j\ni\xi_j^-$ and $W_j\ni\xi_j^+$ such that
		$\overline{O_j}\cap L_j=\varnothing$ and $\overline{W_j}\subset V_j$.
		By north--south dynamics, there is $m_j\geq1$ such that
		\begin{equation}
			\label{eq:abstract-ns-j}
			a_j^m(X_j\setminus O_j)\subset W_j
			\qquad\text{for every }m\geq m_j.
		\end{equation}
		Continuity of the action and compactness of $L_j$ and $\overline{W_j}$ give an
		identity neighborhood $B_j\subset G_j$ such that
		\begin{equation}
			\label{eq:abstract-small-perturbations}
			B_jL_j\subset X_j\setminus O_j,
			\qquad
			B_j^{-1}\overline{W_j}\subset V_j.
		\end{equation}
		
		Choose an identity neighborhood $B_I\subset G_I$ satisfying
		$B_I^{-1}B_I\subset U$, and put
		\[
		B=B_I\times\prod_{j\in J}B_j\subset G,
		\qquad
		a=(e_I,(a_j)_{j\in J})\in G.
		\]
		Let $\pi:G\to G/\Gamma$ be the quotient map.  The finite $G$-invariant measure
		on $G/\Gamma$ has full support, so the nonempty open set $\pi(B)$ has positive
		measure.  Left translation by $a$ is invertible and measure preserving.  By
		Poincar\'e recurrence, there are arbitrarily large integers $m$ such that
		$a^m\pi(B)\cap\pi(B)\neq\varnothing$.
		Choose such an $m$ with $m\geq\max_{j\in J}m_j$.  There are $b_1,b_2\in B$ such
		that $a^mb_1\Gamma=b_2\Gamma$. Hence
		$\gamma=b_2^{-1}a^mb_1\in\Gamma$. Writing
		$b_\ell=(b_{\ell,I},(b_{\ell,j})_{j\in J})$, we obtain
		$p_I(\gamma)=b_{2,I}^{-1}b_{1,I}\in B_I^{-1}B_I\subset U$.
		For $j\in J$, equations \eqref{eq:abstract-ns-j} and
		\eqref{eq:abstract-small-perturbations} give
		\[p_j(\gamma)L_j
			=b_{2,j}^{-1}a_j^mb_{1,j}L_j
			\subset B_j^{-1}a_j^m(X_j\setminus O_j)
			\subset B_j^{-1}W_j
			\subset V_j.\]
		This proves \eqref{eq:simultaneous-recurrence-contraction}. The final assertion
		is obtained by taking $I=N\setminus\{j\}$ and $L_j=X_j\setminus O$. Here $I$ is
		nonempty because $n\geq2$.
	\end{proof}
	
	\subsection{Closed invariant equivalence relations}
	
	For $i\in N$, write $\widehat i=N\setminus\{i\}$ and identify
	$X_N=X_{\widehat i}\times X_i$ when convenient.
	
	\begin{lemma}
		\label{lem:abstract-one-coordinate-saturation}
		Let $\mathcal R\subset X_N\times X_N$ be a closed $\Gamma$-invariant equivalence
		relation.  Fix $i\in N$.  If there exist $x,y\in X_N$ such that
		$(x,y)\in\mathcal R$ and $x_i\neq y_i$, then
		\[
		\bigl((z,\xi),(z,\eta)\bigr)\in\mathcal R
		\]
		for every $z\in X_{\widehat i}$ and every $\xi,\eta\in X_i$.
	\end{lemma}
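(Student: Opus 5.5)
We first use the codimension-one case of Proposition~\ref{prop:codim-one-strip} to collapse the \(i\)-th coordinates of \(x\) and \(y\) onto arbitrary prescribed values while keeping the other coordinates nearly fixed. Then closedness of \(\mathcal R\), transitivity on \(X_i^{(2)}\) and density of \(\Gamma\) move everything to an arbitrary base point \(z\).

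\emph{Step 1 (pull apart in the \(i\)-th coordinate at a fixed base).} Write \(x=(x',x_i)\) and \(y=(y',y_i)\) with \(x',y'\in X_{\widehat i}\) and \(x_i\neq y_i\). Fix \(\xi\neq\eta\) in \(X_i\) and small open neighborhoods \(V\ni\xi\), \(V'\ni\eta\). We want \(\gamma\in\Gamma\) with \(p_{\widehat i}(\gamma)\) close to the identity, \(p_i(\gamma)x_i\in V\) and \(p_i(\gamma)y_i\in V'\). A single contraction toward one open set would send both points into it, so a direct application does not work.

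Instead, use transitivity on \(X_i^{(2)}\): there is \(g\in G_i\) with \(gx_i=\xi\) and \(gy_i=\eta\). Density of the image of \(\Gamma\) in \(G_i/K_i\) alone does not control the \(\widehat i\)-coordinates. The cleaner route is therefore the one below.

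\emph{Step 1\('\) (revised).} Apply Proposition~\ref{prop:codim-one-strip} with \(L_i=X_i\setminus O\), where \(O\) is a small neighborhood of \(y_i\) not containing \(x_i\), and with target \(V\ni\xi\). This gives \(\gamma_m\) with \(p_{\widehat i}(\gamma_m)\to e\) and \(p_i(\gamma_m)x_i\to\xi\). Pass to a subsequence so that \(p_i(\gamma_m)y_i\to\zeta\). By closedness and \(\Gamma\)-invariance, \(((x',\xi),(y',\zeta))\in\mathcal R\).

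This step is the main obstacle: \(\zeta\) may equal \(\xi\), so one must retain a pair whose \(i\)-coordinates differ. To handle this, the plan is to exploit equivalence-relation transitivity rather than pairs. First observe that \(\mathcal R\) contains the diagonal. Moreover, the set \(\{(u,v)\in X_i^2 : ((x',u),(y',v))\in\mathcal R\}\) is closed and invariant under the closure \(H\) in \(G_i\) of the group of elements \(p_i(\gamma)\) for which \(p_{\widehat i}(\gamma)\) lies in a shrinking neighborhood of the identity. This closure is taken in the limiting sense, as a closed subsemigroup of the relation.

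Proposition~\ref{prop:codim-one-strip} shows that \(H\) contains the full north--south dynamics, namely limits of contractions to arbitrary points. By Step 1\('\), \(H\) acts on \(X_i\) with dense orbits. The intended lemma is that the closure of the \(H\)-orbit of a non-diagonal pair \((x_i,y_i)\) contains all of \(X_i^{(2)}\). To show this, apply Proposition~\ref{prop:codim-one-strip} with \(L_i\) a compact set containing \(x_i\) but avoiding a small neighborhood of \(y_i\), and then, separately, with the roles reversed.

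\emph{Step 2 (remove dependence on \(x',y'\)).} From Step 1 we obtain \(((x',\xi),(y',\eta))\in\mathcal R\) for all \(\xi\neq\eta\), hence for all \(\xi,\eta\) by closedness, since \(X_i\) has no isolated points. Using symmetry and transitivity, \(((x',\xi),(x',\eta))\in\mathcal R\) for all \(\xi,\eta\), by composing through \((y',\eta)\).

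Finally, for an arbitrary \(z\in X_{\widehat i}\), apply Proposition~\ref{lem:abstract-recurrence-contraction} coordinatewise. For each \(j\neq i\) take \(I=\{i\}\) and \(L_j=\{x_j\}\), obtaining \(\gamma\) with \(p_j(\gamma)x_j\) near \(z_j\) for all \(j\neq i\) and \(p_i(\gamma)\) near the identity. Translating the pair by \(\gamma\) and passing to the limit using closedness yields \(((z,\xi),(z,\eta))\in\mathcal R\).
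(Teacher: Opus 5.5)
Your Step~2 is fine, but the gap is Step~\(1'\). Nothing in the proposal proves that \(((x',\xi),(y',\eta))\in\mathcal R\) for \(\xi\neq\eta\), and the sketched route cannot deliver it.

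\begin{itemize}
\item Because \(\Gamma\) is discrete, genuine limits in \(G_i\) of \(p_i(\gamma_m)\) with \(p_{\widehat i}(\gamma_m)\to e\) only give \(p_i(\Gamma\cap G_i)\). This is trivial in the Lie case.
\item The ``limiting'' elements furnished by Proposition~\ref{prop:codim-one-strip} are degenerate contractions. They collapse the complement of a small neighborhood to a point, so applied to a pair they either land on the diagonal or leave one coordinate uncontrolled. This is the \(\zeta\) problem you noticed.
\item Composing two contractions ``with roles reversed'' does not help. The second contraction must put the already placed point in its repelling neighborhood (otherwise both points go to the same place), so that placement is lost.
\end{itemize}

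In fact, the density you intend to prove is false in general. Take \(\Gamma=\PSL_2(\Z[\sqrt2])<\PSL_2(\R)^2\), \(i=2\), \(\gamma_0=\begin{pmatrix}3&1\\2&1\end{pmatrix}\), and let \((x_2,y_2)\) be the fixed pair of \(p_2(\gamma_0)\). Let \(A'\subset G_1\) be the one-parameter group through \(p_1(\gamma_0)\) and \(A=\operatorname{Stab}_{G_2}(x_2,y_2)\). Since \(\Q(\sqrt2,\sqrt3)\) is totally real, Dirichlet's theorem makes the centralizer of \(\gamma_0\) in \(\Gamma\) a lattice in \(A'\times A\). Hence \(T=(A'\times A)\Gamma/\Gamma\) is compact.

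Now suppose \(p_1(\gamma_m)\to e\) and \(p_2(\gamma_m)(x_2,y_2)\to g(x_2,y_2)\) off the diagonal. Write \(p_2(\gamma_m)=w_mga_m\) with \(w_m\to e\) and \(a_m\in A\). Then \((e,a_m)\Gamma\to(e,g^{-1})\Gamma\), so \((e,g^{-1})\Gamma\in T\). This forces \(g(x_2,y_2)\in\{p_2(\gamma)^{-1}(x_2,y_2)\mid \gamma\in\Gamma,\ p_1(\gamma)\in A'\}\), which is a countable set. Iterating such limits adds nothing, by a diagonal argument.

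The missing idea, which is the paper's, is to make the \(\widehat i\)-coordinates coincide \emph{first}.
\begin{enumerate}
\item Apply Proposition~\ref{lem:abstract-recurrence-contraction} with \(I=\{i\}\), \(L_j=\{x_j,y_j\}\) (a proper subset, as \(X_j\) is infinite) and \(V_j\) shrinking to a fixed \(z_j\). Both \(x_j\) and \(y_j\) are pushed to \(z_j\) while \(p_i(\gamma_m)\to e\). Closedness gives \(((z,x_i),(z,y_i))\in\mathcal R\) with \(x_i\neq y_i\) intact.
\item Once the base is common, no control of \(p_{\widehat i}\) is needed. Choose \(g_i\) with \(g_i(x_i,y_i)=(\xi,\eta)\) and \(s_m\in\Gamma\) whose images in \(G_i/K_i\) converge to that of \(g_i\). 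Applying \(s_m\) moves the base but keeps it common.
\item A second application of the proposition, with \(I=\{i\}\) and singletons \(L_j=\{p_j(s_m)z_j\}\), returns the base to any prescribed \(w\).
\end{enumerate}
This ordering neutralizes exactly the obstruction you flagged, namely that density does not control the \(\widehat i\)-coordinates. Your Step~2 already contains the final move.
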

	
	\begin{proof}
		We first produce an equivalent pair which differs only in the $i$-th
		coordinate.  Fix $z=(z_j)_{j\neq i}\in X_{\widehat i}$.  Choose an
		identity-neighborhood basis $(U_m)_m$ at $e$ in $G_i$, and, for every $j\neq i$,
		a neighborhood basis $(V_{j,m})_m$ at $z_j$.  Apply
		Proposition~\ref{lem:abstract-recurrence-contraction} with $I=\{i\}$ and
		$L_j=\{x_j,y_j\}$ for $j\neq i$. We obtain $\gamma_m\in\Gamma$ such that
		$p_i(\gamma_m)\to e$, while $p_j(\gamma_m)x_j\to z_j$ and
		$p_j(\gamma_m)y_j\to z_j$ for every $j\neq i$.
		Since $\mathcal R$ is $\Gamma$-invariant and closed,
		\begin{equation}
			\label{eq:abstract-purified-pair}
			\bigl((z,x_i),(z,y_i)\bigr)\in\mathcal R.
		\end{equation}
		
		Put $\alpha=x_i$ and $\beta=y_i$, so $\alpha\neq\beta$.  Let
		$w\in X_{\widehat i}$ and let $\xi,\eta\in X_i$.  There is nothing to prove if
		$\xi=\eta$, so assume $\xi\neq\eta$. By transitivity on $X_i^{(2)}$, choose
		$g_i\in G_i$ such that $g_i\alpha=\xi$ and $g_i\beta=\eta$.
		The image of $\Gamma$ in $G_i/K_i$ is dense, so there is a sequence
		$s_m\in\Gamma$ whose images in $G_i/K_i$ converge to the image of $g_i$.
		The action factors continuously through $G_i/K_i$, so
		$p_i(s_m)\alpha\to\xi$ and $p_i(s_m)\beta\to\eta$.
		Applying $s_m$ to \eqref{eq:abstract-purified-pair} gives
		\begin{equation}
			\label{eq:abstract-moving-common-coordinate}
			\left(
			\bigl(p_{\widehat i}(s_m)z,p_i(s_m)\alpha\bigr),
			\bigl(p_{\widehat i}(s_m)z,p_i(s_m)\beta\bigr)
			\right)\in\mathcal R.
		\end{equation}
		
		Choose identity neighborhoods $U_m\subset G_i$ shrinking to $e$, and, for
		$j\neq i$, neighborhoods $W_{j,m}$ of $w_j$ shrinking to $w_j$.  Apply
		Proposition~\ref{lem:abstract-recurrence-contraction}, again with $I=\{i\}$,
		to the singleton compact sets $L_{j,m}=\{p_j(s_m)z_j\}$ for $j\neq i$. This
		gives $t_m\in\Gamma$ such that $p_i(t_m)\to e$ and
		$p_j(t_ms_m)z_j\to w_j$ for every $j\neq i$.
		Apply $t_m$ to \eqref{eq:abstract-moving-common-coordinate}.  By joint
		continuity of the actions, the resulting related pairs converge to
		$(w,\xi)$ and $(w,\eta)$, respectively.  Closedness of $\mathcal R$ gives
		$\bigl((w,\xi),(w,\eta)\bigr)\in\mathcal R$.
	\end{proof}
	
	\begin{theorem}
		\label{thm:abstract-coordinate-equivalence}
		Every closed $\Gamma$-invariant equivalence relation $\mathcal R$ on $X_N$ is a
		coordinate equivalence relation.  More precisely, there is a unique subset
		$J\subset N$ such that
		\begin{equation}
			\label{eq:abstract-coordinate-relation}
			(x,y)\in\mathcal R
			\quad\Longleftrightarrow\quad
			x_j=y_j\text{ for every }j\in N\setminus J.
		\end{equation}
	\end{theorem}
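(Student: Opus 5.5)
Define $J$ to be the set of indices $i\in N$ for which some pair $(x,y)\in\mathcal R$ has $x_i\neq y_i$. Uniqueness is then immediate. If \eqref{eq:abstract-coordinate-relation} holds for some $J'$, then $j\in J'$ if and only if some related pair differs in coordinate $j$. Indeed, any $j\in J'$ is witnessed by two points differing only at $j$, which exist because $X_j$ is infinite. Hence $J'$ is determined by $\mathcal R$.

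The implication ``$\Rightarrow$'' in \eqref{eq:abstract-coordinate-relation} holds by the definition of $J$. If $(x,y)\in\mathcal R$ and $j\notin J$, then $x_j=y_j$.

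For ``$\Leftarrow$'', suppose $x_j=y_j$ for every $j\in N\setminus J$. Enumerate $J=\{i_1,\ldots,i_k\}$ and build a chain
\[x=z^{(0)},\, z^{(1)},\, \ldots,\, z^{(k)}=y,\]
where $z^{(\ell)}$ is obtained from $z^{(\ell-1)}$ by replacing its $i_\ell$-th coordinate with $y_{i_\ell}$. Consecutive points $z^{(\ell-1)}$ and $z^{(\ell)}$ differ at most in coordinate $i_\ell$. Since $i_\ell\in J$, Lemma~\ref{lem:abstract-one-coordinate-saturation} applies with $i=i_\ell$, taking $z$ to be the common $\widehat{i_\ell}$-part of the two points. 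It gives $(z^{(\ell-1)},z^{(\ell)})\in\mathcal R$. Transitivity of $\mathcal R$ then yields $(x,y)\in\mathcal R$. The final point $z^{(k)}$ equals $y$ because $x$ and $y$ already agree off $J$.

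Essentially all of the difficulty lies in Lemma~\ref{lem:abstract-one-coordinate-saturation}, which is already proved. The only remaining care is bookkeeping: the lemma's hypothesis is witnessed by an arbitrary pair in $\mathcal R$ that differs at $i_\ell$, not necessarily by the pair being connected in the chain. The lemma's conclusion covers all $z$, $\xi$, $\eta$, so this causes no problem.
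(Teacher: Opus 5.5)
Your proposal is correct and follows the paper's proof essentially verbatim: the same definition of $J$, the same one-coordinate-at-a-time chain through Lemma~\ref{lem:abstract-one-coordinate-saturation} closed up by transitivity, and the forward implication by definition of $J$. Your uniqueness argument, using a pair of points that differ only in coordinate $j$, spells out what the paper compresses into ``the same formula makes $J$ unique.''
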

	
	\begin{proof}
		Let $J\subset N$ consist of those coordinates $i$ for which there is a related
		pair $(x,y)\in\mathcal R$ satisfying $x_i\neq y_i$.
		If $i\in J$, Lemma~\ref{lem:abstract-one-coordinate-saturation} shows that the
		$i$-th coordinate may be changed arbitrarily without leaving a $\mathcal R$-class.
		Thus, if $x,y\in X_N$ agree outside $J$, one can pass from $x$ to $y$ by
		changing the coordinates in $J$ one at a time. Transitivity of $\mathcal R$ gives
		$(x,y)\in\mathcal R$. Conversely, if $(x,y)\in\mathcal R$ and $x_i\neq y_i$, then
		$i\in J$ by definition.  Hence every related pair agrees outside $J$, which
		proves \eqref{eq:abstract-coordinate-relation}.  The same formula makes $J$
		unique.
	\end{proof}
	
	\begin{theorem}
		\label{thm:abstract-topological-factor}
		Let $Y$ be a compact Hausdorff $\Gamma$-space and let
		$q:X_N\to Y$ be a continuous surjective $\Gamma$-equivariant map.  Then there
		is a unique subset $S\subset N$ and a $\Gamma$-equivariant homeomorphism
		$h:X_S\to Y$ such that
		\begin{equation}
			\label{eq:abstract-factorization}
			q=h\circ\operatorname{pr}_S.
		\end{equation}
	\end{theorem}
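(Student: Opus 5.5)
The plan is to reduce the statement to Theorem~\ref{thm:abstract-coordinate-equivalence} by looking at the fiber relation of \(q\). Put
\[
\mathcal R_q=\{(x,y)\in X_N\times X_N\mid q(x)=q(y)\}.
\]
This set is an equivalence relation. It is closed because \(q\) is continuous and \(Y\) is Hausdorff, so the diagonal of \(Y\) is closed and \(\mathcal R_q=(q\times q)^{-1}(\Delta_Y)\). It is \(\Gamma\)-invariant by equivariance of \(q\). Theorem~\ref{thm:abstract-coordinate-equivalence} then gives a unique \(J\subset N\) such that \(q(x)=q(y)\) if and only if \(x_j=y_j\) for every \(j\in N\setminus J\). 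We set \(S=N\setminus J\).

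With this choice, \(q(x)\) depends only on \(\operatorname{pr}_S(x)\). Since \(\operatorname{pr}_S\) is surjective, there is a unique map \(h:X_S\to Y\) with \(q=h\circ\operatorname{pr}_S\). The map \(h\) is surjective because \(q\) is. It is injective by the ``only if'' direction of the equivalence: equal images force equal \(S\)-coordinates.

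Next we check continuity. A set \(F\subset Y\) is closed exactly when \(q^{-1}(F)\) is closed, because \(q\) is a continuous surjection from a compact space onto a Hausdorff space and is therefore a quotient map. The same holds for \(\operatorname{pr}_S\), which is a continuous surjection of compact Hausdorff spaces. So if \(F\subset Y\) is closed, then \(\operatorname{pr}_S^{-1}(h^{-1}(F))=q^{-1}(F)\) is closed, and hence \(h^{-1}(F)\) is closed in \(X_S\). Thus \(h\) is a continuous bijection from a compact space onto a Hausdorff space, and therefore a homeomorphism.

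Equivariance of \(h\) follows from \(\operatorname{pr}_S(\gamma x)=p_S(\gamma)\operatorname{pr}_S(x)\). This gives \(h(\gamma\cdot\operatorname{pr}_S x)=q(\gamma x)=\gamma q(x)\), and surjectivity of \(\operatorname{pr}_S\) finishes the check.

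For uniqueness of \(S\), suppose \(q=h'\circ\operatorname{pr}_{S'}\) with \(h'\) a homeomorphism. Then \(\mathcal R_q\) is the coordinate relation attached to \(N\setminus S'\). The uniqueness clause of Theorem~\ref{thm:abstract-coordinate-equivalence} forces \(N\setminus S'=J\), that is, \(S'=S\). The map \(h\) is then determined by \(q\), again because \(\operatorname{pr}_S\) is surjective.

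I expect no genuine obstacle here; all the dynamics is already in Theorem~\ref{thm:abstract-coordinate-equivalence}. The only care needed is the topological point that the induced map \(h\) is continuous, and that is settled by the quotient-map argument above. It uses only that \(X_N\) is compact and \(Y\) is Hausdorff, which are both in the hypotheses.
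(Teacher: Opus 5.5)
Your proposal is correct and follows the paper's proof essentially step for step. Both pass to the closed $\Gamma$-invariant fiber relation $\mathcal R_q$ and apply Theorem~\ref{thm:abstract-coordinate-equivalence} with $S=N\setminus J$. Both then obtain continuity of $h$ from the quotient property of $\operatorname{pr}_S$ and conclude with the compact-to-Hausdorff homeomorphism argument. (Your remark that $q$ is itself a quotient map is true but not needed.)
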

	
	\begin{proof}
		Define $\mathcal R_q$ by $(x,y)\in\mathcal R_q$ when $q(x)=q(y)$. This is a closed
		$\Gamma$-invariant equivalence relation on $X_N$. By
		Theorem~\ref{thm:abstract-coordinate-equivalence}, there is $J\subset N$ such
		that two points have the same image under $q$ exactly when they agree outside
		$J$.  Put $S=N\setminus J$.  Then $q$ and $\operatorname{pr}_S$ have the same
		fibers, so there is a unique bijection $h:X_S\to Y$ satisfying
		\eqref{eq:abstract-factorization}.  Since $\operatorname{pr}_S$ is a quotient
		map, $h$ is continuous.  Since $X_S$ is compact and $Y$ is Hausdorff, $h$ is a
		homeomorphism.  Equivariance follows from that of $q$ and
		$\operatorname{pr}_S$, and uniqueness of $S$ follows from the description of
		the fiber relation.
	\end{proof}
	
	The preceding result has the following commutative formulation.  We give it a
	separate label because this is the form used throughout the crossed-product
	argument.
	
	\begin{theorem}
		\label{thm:dani-coordinate}
		Every unital $\Gamma$-invariant $\rC^*$-subalgebra
		$\mathcal B\subset\rC(X_N)$ is of the form
		\[
		\mathcal B=\rC(X_S)
		\]
		for a unique subset $S\subset N$, where functions on $X_S$ are identified with
		their pullbacks to $X_N$.
	\end{theorem}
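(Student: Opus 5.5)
The plan is to pass through Gelfand duality and reduce to Theorem~\ref{thm:abstract-topological-factor}. Let $\mathcal B\subset\rC(X_N)$ be a unital $\Gamma$-invariant $\rC^*$-subalgebra. It is commutative and unital, so $\mathcal B\cong\rC(Y)$, where $Y$ is its compact Hausdorff spectrum. Since $\mathcal B$ is $\Gamma$-invariant, the action of $\Gamma$ on $\rC(X_N)$ restricts to an action on $\mathcal B$ by automorphisms. This gives a continuous action $\Gamma\curvearrowright Y$.

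The inclusion $\mathcal B\hookrightarrow\rC(X_N)$ is unital and injective. Its dual map $q:X_N\to Y$ sends $x$ to the character $f\mapsto f(x)$. It is continuous and $\Gamma$-equivariant, with the usual convention $(\gamma f)(x)=f(\gamma^{-1}x)$ on both sides. It is surjective because the inclusion is injective: the image $q(X_N)$ is compact, and if it were a proper closed subset, Urysohn would give a nonzero $f\in\rC(Y)$ vanishing on it, whose pullback is $0$, a contradiction. Under these identifications, $\mathcal B=\{\varphi\circ q\mid \varphi\in\rC(Y)\}$.

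Now apply Theorem~\ref{thm:abstract-topological-factor}. This gives $S\subset N$ and an equivariant homeomorphism $h:X_S\to Y$ with $q=h\circ\operatorname{pr}_S$. Then
\[
\mathcal B=\{\varphi\circ h\circ\operatorname{pr}_S\mid\varphi\in\rC(Y)\}=\{\psi\circ\operatorname{pr}_S\mid\psi\in\rC(X_S)\},
\]
because $\varphi\mapsto\varphi\circ h$ is a bijection $\rC(Y)\to\rC(X_S)$. This is exactly $\rC(X_S)$ under the pullback identification.

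Uniqueness of $S$ is checked directly. Suppose $S\neq S'$, say $i\in S\setminus S'$. Since $X_i$ is infinite, it has two distinct points. So there is $f\in\rC(X_i)$, pulled back to $X_N$, that separates two points of $X_N$ agreeing outside coordinate $i$. This $f$ lies in $\rC(X_S)$, but no element of $\rC(X_{S'})$ separates those two points, so $\rC(X_S)\neq\rC(X_{S'})$. Alternatively, uniqueness follows from the uniqueness clause of Theorem~\ref{thm:abstract-topological-factor}.

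No step here is a real obstacle. The only points needing care are surjectivity of $q$ and the consistency of the equivariance convention, both of which are routine.
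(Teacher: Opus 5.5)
Your proposal is correct and follows the paper's proof essentially verbatim. Gelfand duality gives the equivariant map $q:X_N\to Y$, which is surjective because the pullback is injective; Theorem~\ref{thm:abstract-topological-factor} factors it as $h\circ\operatorname{pr}_S$; and uniqueness comes either from that theorem's uniqueness clause or, as in your direct argument, from the fact that distinct coordinate sets give distinct coordinate algebras.
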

	
	\begin{proof}
		Let $Y$ be the Gelfand spectrum of $\mathcal B$. The inclusion
		$\mathcal B\subset\rC(X_N)$ is induced by the continuous map $q:X_N\to Y$
		defined by $q(x)=\operatorname{ev}_x|_{\mathcal B}$.
		The map is $\Gamma$-equivariant.  Its image is compact and hence closed, and it
		is dense because the pullback $q^*:\rC(Y)\to\rC(X_N)$ is injective.  Hence $q$
		is surjective.  By Theorem~\ref{thm:abstract-topological-factor}, there are
		$S\subset N$ and a homeomorphism $h:X_S\to Y$ such that
		$q=h\circ\operatorname{pr}_S$.  Therefore
		\[
		\mathcal B=q^*\rC(Y)
		=\operatorname{pr}_S^*\rC(X_S)
		=\rC(X_S).
		\]
		Uniqueness follows from the uniqueness of $S$ in
		Theorem~\ref{thm:abstract-topological-factor}.
	\end{proof}
	
	\section{Fourier reconstruction and dynamical reductions}
	
	From now until the end of the proof of Theorem~\ref{thm:classification} in Section~\ref{sec:proof},
	we work under all the hypotheses of Theorem~\ref{thm:classification}.
	
	\subsection{Free points in the coordinate actions}
	
	If $S\subset T\subset N$, the coordinate projection $X_T\to X_S$ identifies
	$\rC(X_S)$ with a $\rC^*$-subalgebra of $\rC(X_T)$.  We use these identifications
	without further comment.  Since every rank-one boundary $X_i$ is nontrivial,
	the coordinate $\rC^*$-algebras $\rC(X_S)\subset\rC(X_N)$ are distinct for distinct
	$S\subset N$.
	
	\begin{lemma}
		\label{lem:coordinate-action-topologically-free}
		For every $i\in N$, the points of $X_i$ with trivial stabilizer for the
		coordinate action of $\Gamma$ form a dense $G_\delta$.
	\end{lemma}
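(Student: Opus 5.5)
The plan is to write the set of points with nontrivial stabilizer as a countable union of fixed-point sets and apply the Baire category theorem. For every \(s\in\Gamma\setminus\{e\}\), consider
\[
F_s=\Fix_{X_i}(s)=\{\xi\in X_i\mid p_i(s)\xi=\xi\}.
\]
Since the action of \(\Gamma\) on \(X_i\) through \(p_i\) is continuous and \(X_i\) is Hausdorff, \(F_s\) is closed. Hypothesis (iii) of Theorem~\ref{thm:classification} says that the coordinate action \(\Gamma\curvearrowright X_i\) is topologically free, so every \(F_s\) with \(s\neq e\) has empty interior. Hence each such \(F_s\) is closed and nowhere dense. A point \(\xi\in X_i\) has trivial stabilizer exactly when \(\xi\notin\bigcup_{s\neq e}F_s\), so the set of free points is
\[
X_i^{\mathrm{free}}=\bigcap_{s\in\Gamma\setminus\{e\}}\bigl(X_i\setminus F_s\bigr).
\]

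Next I will use countability. A lattice in the second countable locally compact group \(G\) is discrete, and a discrete subset of a second countable space is countable, so \(\Gamma\) is countable. Thus \(X_i^{\mathrm{free}}\) is a countable intersection of open sets, hence a \(G_\delta\). Each set \(X_i\setminus F_s\) is open, and it is dense because \(F_s\) has empty interior. The space \(X_i\) is compact metrizable, hence a Baire space, so by the Baire category theorem the countable intersection of these dense open sets is dense.

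I do not expect a real obstacle; the only point needing care is to interpret topological freeness of the coordinate action correctly. Here an element \(s\) acts through \(p_i(s)\), and \(s\neq e\) may have \(p_i(s)\) in the kernel \(K_i\), in which case it acts trivially. Hypothesis (iii) as stated in the introduction requires \(\Fix_{X_i}(s)\) to have empty interior for every \(s\in\Gamma\setminus\{e\}\), and this rules that case out, forcing \(\Gamma\to G_i/K_i\) to be injective. So the argument above applies directly with fixed-point sets indexed by elements of \(\Gamma\), and the stabilizers are computed in \(\Gamma\) itself, as the statement requires.
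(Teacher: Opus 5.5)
Your proposal is correct and follows the paper's proof essentially verbatim: countability of the lattice, the complements of the fixed-point sets $\Fix_{X_i}(t)$ for $t\in\Gamma\setminus\{e\}$ being open and dense by topological freeness, and the Baire category theorem applied to their intersection. Your side remark that hypothesis (iii) forces $\Gamma\to G_i/K_i$ to be injective is also correct, though the argument does not need it.
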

	
	\begin{proof}
		A lattice in a second countable locally compact group is countable. For every
		$t\in\Gamma\setminus\{e\}$, topological freeness says that the closed set
		$\Fix_{X_i}(t)$ has empty interior, so its complement is open and dense. The
		Baire category theorem shows that the intersection of these complements is a
		dense $G_\delta$. This is precisely the set of points with trivial stabilizer.
	\end{proof}
	
	The recurrence statement used later is already contained in
	Proposition~\ref{prop:codim-one-strip}: its codimension-one formulation is
	\eqref{eq:codim-one-strip}.
	
	\subsection{Fourier reconstruction and the upper bound}
	
	For a $\Gamma$-$\rC^*$-algebra $\mathcal C$, we write
	$\mathcal C\rtimes_r\Gamma$ for the reduced crossed product and denote by
	$\rE:\mathcal C\rtimes_r\Gamma\to\mathcal C$ the canonical expectation,
	characterized by $\rE(au_\gamma)=\delta_{\gamma,e}a$ \cite{BO08}. We use the
	same symbol for the expectations on all coordinate
	crossed products.  The coordinate classification needed below is
	Theorem~\ref{thm:dani-coordinate}. It was proved once, in the abstract setting,
	in Section~\ref{sec:abstract-coordinate-factor}.
	
	Recall the Haagerup--Kraus approximation property (AP). By
	\cite[Theorem~1.9(c)]{HK94}, for a discrete group this is equivalent to the
	existence of a net of finitely supported completely bounded Fourier multipliers
	converging to the identity in the stable point-norm topology. See also
	\cite[Definition~2.3]{Su17}. In the abstract setting,
	{\rm(AP)} for $\Gamma$ is a hypothesis of Theorem~\ref{thm:classification}. For
	Corollary~\ref{cor:dani-real-rank-one-direct} and the
	two concrete families treated in Section~\ref{sec:applications}, this hypothesis
	is verified in the corresponding proofs.
	
	The relevance of {\rm(AP)} is the following Fubini-type reconstruction theorem
	of Suzuki \cite[Proposition~3.4]{Su17}.  No freeness assumption on the action is
	involved. The following proposition is independent of the standing
	product-boundary hypotheses.
	
	\begin{proposition}
		\label{prop:suzuki}
		Suppose $\Gamma$ has {\rm(AP)}.  Let $\Gamma\curvearrowright\mathcal C$ be an
		action on a unital $\rC^*$-algebra and let
		$\mathcal C_0\subset\mathcal C$ be a $\Gamma$-invariant unital
		$\rC^*$-subalgebra.  Then, inside $\mathcal C\rtimes_r\Gamma$,
		\[
		\mathcal C_0\rtimes_r\Gamma
		=
		\{a\in\mathcal C\rtimes_r\Gamma\mid
		\rE(au_\gamma^*)\in\mathcal C_0
		\text{ for every }\gamma\in\Gamma\}.
		\]
	\end{proposition}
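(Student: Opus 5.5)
The inclusion ``$\subset$'' is immediate, and the proof of the reverse inclusion rests on an approximation by finitely supported Fourier multipliers. For the easy direction, note first that $\rE$ restricted to $\mathcal C_0\rtimes_r\Gamma$ is the canonical expectation onto $\mathcal C_0$. If $a\in\mathcal C_0\rtimes_r\Gamma$, then $au_\gamma^*\in\mathcal C_0\rtimes_r\Gamma$, so $\rE(au_\gamma^*)\in\mathcal C_0$. Here I use that the inclusion $\mathcal C_0\rtimes_r\Gamma\subset\mathcal C\rtimes_r\Gamma$ is isometric, which holds because $\mathcal C_0\otimes\ell^2(\Gamma)\subset\mathcal C\otimes\ell^2(\Gamma)$ as Hilbert modules, and that the two expectations are compatible.

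For the reverse inclusion, fix $a$ with all Fourier coefficients $a_\gamma:=\rE(au_\gamma^*)$ in $\mathcal C_0$. By (AP) in the form of \cite[Theorem~1.9(c)]{HK94}, choose a net $\varphi_k$ of finitely supported functions on $\Gamma$ whose multipliers $m_{\varphi_k}$ converge to the identity in the stable point-norm topology on $\rC^*_\lambda(\Gamma)$. Each such multiplier induces a completely bounded map $\Phi_k$ on $\mathcal C\rtimes_r\Gamma$ with $\Phi_k(cu_\gamma)=\varphi_k(\gamma)cu_\gamma$. This is the key step, and it will be justified by realizing $\mathcal C\rtimes_r\Gamma$ inside $(\mathcal C\rtimes_r\Gamma)\otimes_{\min}\rC^*_\lambda(\Gamma)$ through the coaction $cu_\gamma\mapsto cu_\gamma\otimes\lambda_\gamma$, which is an injective $*$-homomorphism by Fell absorption. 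One then applies $\mathrm{id}\otimes m_{\varphi_k}$ and uses that the image is preserved. By construction
\[
\Phi_k(a)=\sum_{\gamma\in\supp\varphi_k}\varphi_k(\gamma)\,a_\gamma u_\gamma,
\]
which lies in $\mathcal C_0\rtimes_r\Gamma$ because $\supp\varphi_k$ is finite and every $a_\gamma\in\mathcal C_0$.

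The main obstacle is to show that $\Phi_k(a)\to a$ in norm. Stable point-norm convergence means exactly that $\mathrm{id}_B\otimes m_{\varphi_k}\to\mathrm{id}$ pointwise on $B\otimes_{\min}\rC^*_\lambda(\Gamma)$ for every $\rC^*$-algebra $B$. I would apply this with $B=\mathcal C\rtimes_r\Gamma$ to the element $\delta(a)$, where $\delta$ denotes the coaction embedding above, and then pull back along the isometric embedding $\delta$. This yields $\Phi_k(a)\to a$. Since $\mathcal C_0\rtimes_r\Gamma$ is norm closed, it follows that $a\in\mathcal C_0\rtimes_r\Gamma$.

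Two further checks should be recorded. The first is that $\delta\circ\Phi_k=(\mathrm{id}\otimes m_{\varphi_k})\circ\delta$; this holds on the dense algebraic crossed product and extends by continuity. The second is that the formula for $\Phi_k(a)$ holds for general $a$, not just for finite sums; this follows because both sides are continuous in $a$ and coefficient-wise. This is precisely Suzuki's argument \cite[Proposition~3.4]{Su17}, and no freeness of the action enters.
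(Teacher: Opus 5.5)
Your proposal is correct and follows the same route as the paper. The paper simply invokes Suzuki's reconstruction theorem for the inclusion $\supset$, noting that by $\Gamma$-invariance the closed span of $\{cu_\gamma\mid c\in\mathcal C_0\}$ is $\mathcal C_0\rtimes_r\Gamma$, and gets $\subset$ by continuity of the Fourier coefficient maps; you instead unpack Suzuki's argument itself, via the Fell-absorption coaction and finitely supported multipliers.

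One point to tighten: pointwise convergence of $\mathrm{id}_B\otimes m_{\varphi_k}$ for an arbitrary $B$ such as $\mathcal C\rtimes_r\Gamma$ is not literally the $\mathcal K(\ell^2)$-based definition of the stable point-norm topology. (AP) does give it, either through the strong operator approximation property of $\rC^*_\lambda(\Gamma)$ established by Haagerup--Kraus, or, for the single element $\delta(a)$, through weak convergence followed by convex combinations of the $\varphi_k$, which remain finitely supported.
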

	
	\begin{proof}
		Suzuki's result says that if all Fourier coefficients of $a$ belong to the
		closed subspace $\mathcal C_0$, then
		$a\in\overline{\operatorname{span}}\{cu_\gamma\mid c\in\mathcal C_0,
		\ \gamma\in\Gamma\}$.
		Since $\mathcal C_0$ is $\Gamma$-invariant, this closed linear span is precisely
		$\mathcal C_0\rtimes_r\Gamma$.  The converse follows first for finite Fourier
		polynomials and then by norm continuity of the Fourier coefficient maps.
	\end{proof}

	Suzuki also proves the converse. If this reconstruction property holds for every
	$\Gamma$-$\rC^*$-algebra and every closed subspace, then $\Gamma$ has {\rm(AP)}
	\cite[Proposition~3.4]{Su17}. Thus {\rm(AP)} is exactly the hypothesis behind this
	general Fourier-reconstruction route.
	
	Combining Theorem~\ref{thm:dani-coordinate} with
	Proposition~\ref{prop:suzuki} gives the upper bound directly at the
	$\rC^*$-level.
	
	\begin{proposition}
		\label{prop:dani-ap-upper}
		For every intermediate $\rC^*$-subalgebra $\mathcal D$ of $\mathcal A_N$
		containing $\rC^*_{\lambda}(\Gamma)$, there is a unique $S\subset N$ such that
		\[
		\rC^*(\rE(\mathcal D))=\rC(X_S).
		\]
		For this $S$, one has $\mathcal D\subset\mathcal A_S$.
	\end{proposition}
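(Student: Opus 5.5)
The plan is to identify $\rC^*(\rE(\mathcal D))$ as a unital $\Gamma$-invariant $\rC^*$-subalgebra of $\rC(X_N)$, apply Theorem~\ref{thm:dani-coordinate} to it, and then use Proposition~\ref{prop:suzuki} for the inclusion $\mathcal D\subset\mathcal A_S$.

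\textbf{Invariance of $\rE(\mathcal D)$.} Put $\mathcal B=\rC^*(\rE(\mathcal D))\subset\rC(X_N)$, the $\rC^*$-algebra generated by the image of $\mathcal D$ under the canonical expectation.
\begin{enumerate}[\rm (a)]
\item $\mathcal B$ is unital, since $1\in\rC^*_{\lambda}(\Gamma)\subset\mathcal D$ and $\rE(1)=1$.
\item $\mathcal B$ is $\Gamma$-invariant. For $d\in\mathcal D$ and $\gamma\in\Gamma$, the element $u_\gamma d u_\gamma^*$ lies in $\mathcal D$, because $u_\gamma\in\rC^*_{\lambda}(\Gamma)\subset\mathcal D$.
\item The expectation is $\Gamma$-equivariant: $\rE(u_\gamma d u_\gamma^*)=\gamma\cdot\rE(d)$. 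This identity holds on Fourier polynomials $\sum_s f_s u_s$, since $u_\gamma f_s u_s u_\gamma^*=(\gamma\cdot f_s)u_{\gamma s\gamma^{-1}}$ and $\gamma s\gamma^{-1}=e$ exactly when $s=e$. It extends to all of $\mathcal A_N$ by continuity of $\rE$.
\item Hence $\rE(\mathcal D)$ is a $\Gamma$-invariant set, and so is the $\rC^*$-algebra it generates.
\end{enumerate}
Theorem~\ref{thm:dani-coordinate} then gives a unique $S\subset N$ with $\mathcal B=\rC(X_S)$. Uniqueness uses that the coordinate algebras are distinct for distinct $S$, as noted at the start of this section.

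\textbf{Fourier coefficients of $\mathcal D$.} Let $d\in\mathcal D$ and $\gamma\in\Gamma$. Then $du_\gamma^*\in\mathcal D$, again because $u_\gamma^*\in\rC^*_{\lambda}(\Gamma)\subset\mathcal D$. Therefore
\[
\rE(du_\gamma^*)\in\rE(\mathcal D)\subset\rC(X_S).
\]
So every Fourier coefficient of $d$ lies in the $\Gamma$-invariant unital $\rC^*$-subalgebra $\rC(X_S)\subset\rC(X_N)$.

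\textbf{Reconstruction.} Since $\Gamma$ has (AP), Proposition~\ref{prop:suzuki} applies with $\mathcal C=\rC(X_N)$ and $\mathcal C_0=\rC(X_S)$. It gives $d\in\rC(X_S)\rtimes_r\Gamma=\mathcal A_S$ for every $d\in\mathcal D$. This proves $\mathcal D\subset\mathcal A_S$.

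None of these steps is a real obstacle. The substantive inputs are Theorem~\ref{thm:dani-coordinate} and Suzuki's reconstruction theorem, both available earlier in the paper. The only point needing care is the equivariance of $\rE$ under conjugation by the unitaries $u_\gamma$, which is the routine check in (c).
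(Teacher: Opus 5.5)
Your proposal is correct and follows essentially the same route as the paper's proof. You prove $\Gamma$-invariance of $\rC^*(\rE(\mathcal D))$ via $\rE(u_\gamma d u_\gamma^*)=\gamma\cdot\rE(d)$, apply Theorem~\ref{thm:dani-coordinate}, and then feed the coefficients $\rE(du_\gamma^*)\in\rC(X_S)$ into Proposition~\ref{prop:suzuki}; you also explicitly check unitality and the equivariance of $\rE$, both of which the paper uses without comment.
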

	
	\begin{proof}
		The set $\rE(\mathcal D)$ need not be closed under multiplication, so consider
		the $\rC^*$-algebra $\rC^*(\rE(\mathcal D))$ that it generates. Since
		$\mathcal D$ contains the canonical unitaries, this $\rC^*$-algebra is
		$\Gamma$-invariant. Indeed, for $d\in\mathcal D$ and
		$\gamma\in\Gamma$, one has $u_\gamma d u_\gamma^*\in\mathcal D$ and
		$\gamma\cdot\rE(d)=\rE(u_\gamma d u_\gamma^*)$.
		Theorem~\ref{thm:dani-coordinate} gives a unique $S\subset N$ such that
		$\rC^*(\rE(\mathcal D))=\rC(X_S)$.
		
		Let $d\in\mathcal D$ and $\gamma\in\Gamma$. Since $u_\gamma^*\in\mathcal D$,
		one has $du_\gamma^*\in\mathcal D$ and therefore
		$\rE(du_\gamma^*)\in\rE(\mathcal D)\subset\rC(X_S)$.
		The group $\Gamma$ has {\rm(AP)}, so Proposition~\ref{prop:suzuki}, applied to
		$\mathcal C=\rC(X_N)$ and $\mathcal C_0=\rC(X_S)$, gives
		$d\in\mathcal A_S$.  Hence $\mathcal D\subset\mathcal A_S$.
	\end{proof}

	\section{Proofs of Theorem~\ref{thm:classification} and Corollary~\ref{cor:dani-real-rank-one-direct}}\label{sec:proof}
	
	\subsection{Powers averaging}
	
	We first recall the support estimate used to remove nonzero Fourier coefficients. It is an elementary form of Powers' averaging method \cite{Po75}.
	
	\begin{lemma}\label{lem:powers-criterion}
		Let \(\Gamma\curvearrowright\mathcal C\) be an action on a unital
		\(\rC^*\)-algebra and let \(t\in\Gamma\setminus\{e\}\). Assume that there are
		a partition \(\Gamma=C\sqcup D\) and elements
		\(s_1,\ldots,s_m\in\Gamma\) such that \(tC\cap C=\varnothing\) and the subsets
		\(s_1D,\ldots,s_mD\) are pairwise disjoint. If
		\(b_1,\ldots,b_m\in\mathcal C\) and \(\|b_i\|\leq B\), then inside
		\(\mathcal C\rtimes_r\Gamma\) one has
		\[
		\left\|\frac1m\sum_{i=1}^m b_i u_{s_i t s_i^{-1}}\right\|
		\leq \frac{2B}{\sqrt m}.
		\]
	\end{lemma}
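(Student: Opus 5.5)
Represent \(\mathcal C\rtimes_r\Gamma\) faithfully on the Hilbert \(\mathcal C\)-module \(\ell^2(\Gamma,\mathcal C)\), or on \(\mathcal H\otimes\ell^2(\Gamma)\) via a faithful representation \(\pi:\mathcal C\to B(\mathcal H)\). There \(u_g\) acts as \(1\otimes\lambda_g\) and \(b\) acts diagonally by \((b\xi)(h)=\pi(h^{-1}\cdot b)\xi(h)\). Put \(x=\frac1m\sum_i b_iu_{s_its_i^{-1}}\) and write \(t_i=s_its_i^{-1}\). The guiding point is that \(t_i\) moves \(s_iC\) into the complement of \(s_iC\), since \(t_i s_iC=s_itC\) and \(tC\cap C=\varnothing\). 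For each \(i\), let \(P_i\) be the projection onto \(\mathcal H\otimes\ell^2(s_iD)\); it commutes with the diagonal \(\mathcal C\)-action. Because \(s_1D,\ldots,s_mD\) are pairwise disjoint, the \(P_i\) are mutually orthogonal.

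Split each term as \(b_iu_{t_i}=b_iu_{t_i}P_i+b_iu_{t_i}(1-P_i)\). Here \(1-P_i\) is the projection onto \(s_iC\), and \(u_{t_i}\) maps \(\ell^2(s_iC)\) into \(\ell^2(s_itC)\subset\ell^2(s_iD)\). Hence \(u_{t_i}(1-P_i)=P_iu_{t_i}(1-P_i)\), and since \(b_i\) commutes with \(P_i\), we get \(b_iu_{t_i}(1-P_i)=P_ib_iu_{t_i}(1-P_i)\).

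The first sum is \(\frac1m\sum_i b_iu_{t_i}P_i\). For a unit vector \(\xi\), the triangle inequality and Cauchy--Schwarz give
\[
\Bigl\|\sum_i b_iu_{t_i}P_i\xi\Bigr\|\le B\sum_i\|P_i\xi\|\le B\sqrt m\Bigl(\sum_i\|P_i\xi\|^2\Bigr)^{1/2}\le B\sqrt m.
\]
So the first sum has norm at most \(B/\sqrt m\).

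The second sum is \(\frac1m\sum_i P_iy_i\) with \(\|y_i\|\le B\). Taking adjoints, \(\|\sum_iP_iy_i\|=\|\sum_i y_i^*P_i\|\), and the same Cauchy--Schwarz estimate gives at most \(B\sqrt m\). Thus the second sum also has norm at most \(B/\sqrt m\), and adding the two bounds yields \(2B/\sqrt m\).

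The main thing to check is that the decomposition is compatible with the coefficients \(b_i\). This holds because \(b_i\) acts diagonally in the group variable in the regular representation, whichever left or right convention is used for \(\ell^2(\Gamma)\). With the opposite convention, where \(u_g\) is right translation, one replaces \(s_iD\) by the appropriate translate. The disjointness hypotheses are set up so that, with left translation, the sets \(s_iD\) are the relevant ones. Everything else is elementary Hilbert-space estimation.
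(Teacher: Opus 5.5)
Your proof is correct and is essentially the paper's argument: the same left regular representation on $\ell^2(\Gamma,H)$, the same mutually orthogonal projections $P_i$ onto $\ell^2(s_iD,H)$ commuting with the diagonal coefficients, the same observation that $(1-P_i)\,b_iu_{s_its_i^{-1}}\,(1-P_i)=0$ because $tC\subset D$, and the same Cauchy--Schwarz bound $\sum_i\|P_i\xi\|\leq\sqrt m$. The only difference is cosmetic. You decompose $T_i=T_iP_i+P_iT_i(1-P_i)$ and bound the second sum by passing to adjoints, whereas the paper writes $T_i=P_iT_i+(1-P_i)T_iP_i$ and estimates the bilinear form $\sum_i\langle T_i\zeta,\eta\rangle$ directly.
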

	
	\begin{proof}
		Use the faithful regular representation of the reduced crossed product on \(\ell^2(\Gamma,H)\) associated with a faithful representation of \(\mathcal C\) on \(H\), and keep the notation \(u_h\) for the represented unitaries. We use the convention that \(u_h\) sends \(\ell^2(Y,H)\) onto \(\ell^2(hY,H)\). The representation of \(\mathcal C\) is diagonal over the \(\Gamma\)-coordinate, hence it preserves the subspaces \(\ell^2(Y,H)\) for \(Y\subset\Gamma\) and commutes with the corresponding support projections.
		
		Put \(h_i=s_i t s_i^{-1}\), and let \(P_i\) be the projection onto \(\ell^2(s_iD,H)\) and \(Q_i=1-P_i\). Then the \(P_i\)'s are pairwise orthogonal. Since \(Q_i\) projects onto \(\ell^2(s_iC,H)\), the relation \(tC\cap C=\varnothing\) gives \(u_{h_i}\ell^2(s_iC,H)=\ell^2(s_itC,H)\perp\ell^2(s_iC,H)\). As \(b_i\) preserves supports in the \(\Gamma\)-coordinate, \(Q_i b_i u_{h_i}Q_i=0\). Hence, for \(T_i=b_i u_{h_i}\), one has \(T_i=(P_i+Q_i)T_i(P_i+Q_i)=P_iT_iP_i+P_iT_iQ_i+Q_iT_iP_i=P_iT_i+Q_iT_iP_i\). Thus, for unit vectors \(\zeta,\eta\in\ell^2(\Gamma,H)\),
		\[
		\left|\sum_i\langle b_i u_{h_i}\zeta,\eta\rangle\right|
		\leq
		B\sum_i\|P_i\eta\|+B\sum_i\|P_i\zeta\|
		\leq 2B\sqrt m.
		\]
		Dividing by \(m\) gives the estimate.
	\end{proof}
	
	\subsection{Coordinate slicing}
	
	For \(k\in N\) and \(\xi\in X_k\), write \(\sigma_{k,\xi}:\rC(X_N)\to\rC(X_{N\setminus\{k\}})\colon f\mapsto f_{k,\xi}\), where \(f_{k,\xi}((x_i)_{i\neq k})=f(x_1,\ldots,x_{k-1},\xi,x_{k+1},\ldots,x_n)\). Using the canonical inclusion \(\rC(X_{N\setminus\{k\}})\subset\rC(X_N)\), we regard every slice as a function on \(X_N\) that is independent of the \(k\)-th coordinate.
	
	The following averaging argument produces such slices inside the diagonal part of any intermediate \(\rC^*\)-subalgebra.
	
	\begin{lemma}\label{lem:coordinate-slicing}
		Let \(k\in N\) and let \(\xi\in X_k\) have trivial stabilizer for the \(\Gamma\)-action on \(X_k\) through \(p_k\). If \(\mathcal D\) is an intermediate \(\rC^*\)-subalgebra of \(\mathcal A_N\) containing \(\rC^*_\lambda(\Gamma)\), then \(\sigma_{k,\xi}(\rE(\mathcal D))\subset\mathcal D\cap\rC(X_{N\setminus\{k\}})\).
	\end{lemma}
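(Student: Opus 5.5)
Fix \(d\in\mathcal D\) and \(\varepsilon>0\); we must show \(\sigma_{k,\xi}(\rE(d))\in\mathcal D\). First approximate \(d\) within \(\varepsilon\) by a Fourier polynomial \(d_0=\sum_{t\in F}f_tu_t\) with \(F\ni e\) finite and \(f_t\in\rC(X_N)\), and put \(f=f_e\). The element \(d_0\) need not lie in \(\mathcal D\), but all the operations below are contractive or averaging, so errors of size \(\varepsilon\) propagate harmlessly. The aim is to build unital averaging maps
\[
\Phi(a)=\frac1m\sum_{i=1}^m u_{\gamma_i}au_{\gamma_i}^*,\qquad \gamma_i\in\Gamma,
\]
which map \(\mathcal D\) into itself because \(\mathcal D\supset\rC^*_\lambda(\Gamma)\). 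We want \(\Phi\) to achieve two things at once: it should turn \(\gamma\cdot f\) into something \(\varepsilon\)-close to the slice \(f_{k,\xi}\), and it should make \(\sum_{t\neq e}\Phi(f_tu_t)\) have norm at most \(\varepsilon\). Since \(\|\Phi\|\leq1\), this gives \(\operatorname{dist}(f_{k,\xi},\mathcal D)\leq 3\varepsilon\). Closedness of \(\mathcal D\) then yields \(\sigma_{k,\xi}(\rE(d))\in\mathcal D\), and the slice lies in \(\rC(X_{N\setminus\{k\}})\) by construction.

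\textbf{Step 1: contracting the \(k\)-th coordinate.} We use the codimension-one recurrence \eqref{eq:codim-one-strip} with \(j=k\). Pick a small neighborhood \(V\) of \(\xi\) on which \(f\) varies by less than \(\varepsilon\) in the \(k\)-th variable, uniformly in the other variables (uniform continuity). Pick a small open set \(O\subset X_k\) and an identity neighborhood \(U\subset G_{N\setminus\{k\}}\) acting almost trivially, so that \(\|g\cdot h-h\|<\varepsilon\) for \(g\in U\) and the finitely many functions involved. We then get \(\gamma\) with \(p_k(\gamma)(X_k\setminus O)\subset V\). Hence \((\gamma^{-1}\cdot f)(x)\approx f_{k,\xi}(x)\) for all \(x\) with \(x_k\notin O\). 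The other coordinates barely move, but the values of \(\gamma^{-1}\cdot f\) over the bad strip \(\{x_k\in O\}\) are uncontrolled.

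To remove the strip, we average over several such \(\gamma\)'s whose bad sets \(O_1,\dots,O_m\) are pairwise disjoint. This is possible since \(X_k\) has no isolated points. On every point, at most one term is bad, so the diagonal average is within \(\varepsilon+2\|f\|/m\) of \(f_{k,\xi}\).

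\textbf{Step 2: killing the off-diagonal terms.} The element \(\Phi(f_tu_t)\) has the form \(\frac1m\sum_i b_iu_{\gamma_it\gamma_i^{-1}}\), which is exactly the setting of Lemma~\ref{lem:powers-criterion}. So for each \(t\in F\setminus\{e\}\) we must produce a Powers partition \(\Gamma=C\sqcup D\) with \(tC\cap C=\varnothing\) and \(\gamma_iD\) pairwise disjoint. We construct it dynamically, using that \(\xi\) has trivial stabilizer. Then \(p_k(t)\xi\neq\xi\) for \(t\neq e\), so there is a small neighborhood \(W\ni\xi\) with \(p_k(t)W\cap W=\varnothing\) for all \(t\in F\setminus\{e\}\). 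Set \(C=\{s\in\Gamma:p_k(s)^{-1}\cdots\}\), defined via the orbit of a base point relative to \(W\). Choosing the \(\gamma_i\) from Step 1 with their attracting sets arranged to give disjoint translates, we obtain the required partition. Lemma~\ref{lem:powers-criterion} then bounds \(\|\Phi(\sum_{t\ne e}f_tu_t)\|\) by \(|F|\cdot 2\max\|f_t\|/\sqrt m\).

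\textbf{Main obstacle.} The main obstacle is making Steps 1 and 2 compatible: the same \(\gamma_i\) must both contract \(X_k\setminus O_i\) into \(V\subset W\) and produce a Powers partition for every \(t\in F\setminus\{e\}\). My first attempt is to define \(D=\{s:p_k(s^{-1})\xi\in W'\}\)-type sets using the north–south behaviour of the \(p_k(\gamma_i)\), rather than an abstract free-group argument. Failing that, I would replace the partition argument by the direct estimate in the proof of Lemma~\ref{lem:powers-criterion}, phrased on \(\ell^2(X_k)\)-type supports.
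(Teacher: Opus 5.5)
Your Step 1 is sound. It is the paper's argument in inverted form. The paper fixes one neighborhood \(O\ni\xi\) and pairwise disjoint targets \(V_1,\dots,V_m\), picks \(s_i\) with \(p_k(s_i)(X_k\setminus O)\subset V_i\), and averages with \(u_{s_i}\). Your \(\gamma_i\), with \(p_k(\gamma_i)(X_k\setminus O_i)\subset V\), play the role of \(s_i^{-1}\). For this reason your \(\Phi\) must conjugate by \(u_{\gamma_i^{-1}}\), not \(u_{\gamma_i}\), since Step 1 controls \(\gamma_i^{-1}\cdot f\).

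The genuine gap is Step 2. You never define the Powers partition: \(C=\{s: p_k(s)^{-1}\cdots\}\) is left blank, and you explicitly leave open whether the two steps are compatible. This is the heart of the lemma and the only place the trivial-stabilizer hypothesis is used, so as written the proof is incomplete. Your fallbacks do not work as stated. Lemma~\ref{lem:powers-criterion} involves left translates \(tC\) and \(s_iD\). These are compatible with the orbit map \(s\mapsto p_k(s)\xi\), not with \(s\mapsto p_k(s^{-1})\xi\). The regular representation lives on \(\ell^2(\Gamma,H)\), so the relevant supports are subsets of \(\Gamma\), not of \(X_k\).

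The missing idea is that there is no compatibility problem once the common contraction target is also the separating set. Choose the neighborhood \(V\) of \(\xi\) so small that, besides the uniform-continuity requirement, \(p_k(t)V\cap V=\varnothing\) for all \(t\in F\setminus\{e\}\). This is possible because \(\xi\) has trivial stabilizer and \(F\) is finite. Set \(C=\{s\in\Gamma\mid p_k(s)\xi\in V\}\) and \(D=\Gamma\setminus C\). Then \(tC=\{s\mid p_k(s)\xi\in p_k(t)V\}\) is disjoint from \(C\).

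Apply Lemma~\ref{lem:powers-criterion} with the elements \(\gamma_i^{-1}\). The conjugated terms are \((\gamma_i^{-1}\cdot f_t)u_{\gamma_i^{-1}t\gamma_i}\). If \(\delta\in D\), then \(p_k(\gamma_i)\bigl(p_k(\gamma_i^{-1}\delta)\xi\bigr)=p_k(\delta)\xi\notin V\). The contrapositive of \(p_k(\gamma_i)(X_k\setminus O_i)\subset V\) then gives \(p_k(\gamma_i^{-1}\delta)\xi\in O_i\). Hence the translates \(\gamma_i^{-1}D\) are pairwise disjoint exactly because the bad sets \(O_i\) of Step 1 are. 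You attribute the disjointness to ``attracting sets,'' but in your normalization the attracting set \(V\) is common to all \(i\); it is the disjoint \(O_i\) that do the work.

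With this choice the same elements serve both steps, and your error bookkeeping goes through.
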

	
	\begin{proof}
		Fix \(d\in\mathcal D\) and \(\varepsilon>0\). Choose a finite Fourier polynomial \(p=f+\sum_{t\in F}f_tu_t\), where \(F\subset\Gamma\setminus\{e\}\), such that \(\|d-p\|<\varepsilon\). Then \(f=\rE(p)\) and \(\|\rE(d)-f\|<\varepsilon\).
		
		By the stabilizer hypothesis, \(\xi\) is fixed by no \(p_k(t)\) with \(t\in F\). Since \(F\) is finite, choose a nonempty open neighborhood \(O\subset X_k\) of \(\xi\), with nonempty complement, so small that \(p_k(t)\overline O\cap\overline O=\varnothing\) for every \(t\in F\). After replacing \(O\) by a smaller neighborhood of \(\xi\) whose closure is contained in the preceding \(O\), we may also assume that \(|f(y,z)-f(y,\xi)|<\varepsilon/2\) for all \(y\in X_{N\setminus\{k\}}\) and \(z\in O\). This replacement preserves the separation condition and the nonemptiness of the complement. Next choose an identity neighborhood \(U\subset G_{N\setminus\{k\}}\) such that \(|f(g^{-1}y,z)-f(y,z)|<\varepsilon/2\) for all \(g\in U\), \(y\in X_{N\setminus\{k\}}\), and \(z\in O\). The first choice follows from uniform continuity of \(f\) on \(X_N\). The second follows from continuity of the action \(G_{N\setminus\{k\}}\curvearrowright\rC(X_N)\) in the uniform norm, which in turn follows from joint continuity of the action on the compact space \(X_N\). The triangle inequality gives \(|f(g^{-1}y,z)-f(y,\xi)|<\varepsilon\) for all such \(g,y,z\).
		
		Put \(M=\sum_{t\in F}\|f_t\|\). Choose \(m\) large enough that \(2M/\sqrt m<\varepsilon\) and \(2\|f\|/m<\varepsilon\). Since \(X_k\) is infinite and has no isolated points, it contains pairwise disjoint nonempty open sets \(V_1,\ldots,V_m\). The condition \(X_k\setminus O\neq\varnothing\) is used here to invoke the codimension-one assertion of Proposition~\ref{prop:codim-one-strip}. Applying that proposition separately to the sets \(V_i\), choose \(s_1,\ldots,s_m\in\Gamma\) such that \(p_{N\setminus\{k\}}(s_i)\in U\) and \(p_k(s_i)(X_k\setminus O)\subset V_i\) for every \(1\leq i\leq m\). Define \(\Psi:\mathcal A_N\to\mathcal A_N\colon a\mapsto m^{-1}\sum_{i=1}^m u_{s_i}au_{s_i}^*\). Since \(\mathcal D\) contains \(\rC^*_\lambda(\Gamma)\), one has \(\Psi(\mathcal D)\subset\mathcal D\).
		
		We first estimate the diagonal part. Write \(x=(y,z)\in X_{N\setminus\{k\}}\times X_k\). At most one of the sets \(V_i\) contains \(z\). For every other \(i\), one has \(z\notin V_i\). The contrapositive of \(p_k(s_i)(X_k\setminus O)\subset V_i\) gives \(p_k(s_i)^{-1}z\in O\). Hence \((s_i\cdot f)(y,z)=f(p_{N\setminus\{k\}}(s_i)^{-1}y,p_k(s_i)^{-1}z)\) is within \(\varepsilon\) of \(\sigma_{k,\xi}(f)(y,z)=f(y,\xi)\). Therefore
		\(\left\|m^{-1}\sum_{i=1}^m s_i\cdot f-\sigma_{k,\xi}(f)\right\|
		\leq\varepsilon+2\|f\|/m<2\varepsilon\).
		
		For the nonzero Fourier coefficients, set \(C=\{\gamma\in\Gamma\mid p_k(\gamma)\xi\in O\}\), and put \(D=\Gamma\setminus C\). If \(\gamma\in tC\cap C\), write \(\gamma=t\delta\) with \(\delta\in C\). Then \(p_k(\delta)\xi\in O\) and \(p_k(t)p_k(\delta)\xi=p_k(\gamma)\xi\in O\), contradicting \(p_k(t)O\cap O=\varnothing\). Thus \(tC\cap C=\varnothing\) for every \(t\in F\). If \(\gamma=s_i\delta\in s_iD\), then \(p_k(\delta)\xi\notin O\), so \(p_k(\gamma)\xi\in V_i\). Hence \(\gamma\in s_iD\cap s_jD\) implies \(p_k(\gamma)\xi\in V_i\cap V_j\), which is impossible when \(i\neq j\). Thus the sets \(s_iD\) are pairwise disjoint. Applying Lemma~\ref{lem:powers-criterion} to \(b_i=s_i\cdot f_t\) gives
		\(\left\|m^{-1}\sum_{i=1}^m(s_i\cdot f_t)u_{s_i t s_i^{-1}}\right\|
		\leq2\|f_t\|/\sqrt m\).
		Summing over \(t\in F\) gives \(\|\Psi(p)-\sigma_{k,\xi}(f)\|<3\varepsilon\). Finally,
		\[\|\Psi(d)-\sigma_{k,\xi}(\rE(d))\|
			\leq
			\|\Psi(d-p)\|+\|\Psi(p)-\sigma_{k,\xi}(f)\| +\|\sigma_{k,\xi}(f-\rE(d))\|
			<5\varepsilon.	\]
		For each \(\varepsilon>0\), the preceding construction therefore gives an averaging map \(\Psi_\varepsilon\) such that \(\Psi_\varepsilon(d)\in\mathcal D\) and \(\|\Psi_\varepsilon(d)-\sigma_{k,\xi}(\rE(d))\|<5\varepsilon\). Choose \(\varepsilon_r\downarrow0\) and corresponding maps \(\Psi_r\). Then \(\Psi_r(d)\to\sigma_{k,\xi}(\rE(d))\) in norm. Since \(\mathcal D\) is norm closed, \(\sigma_{k,\xi}(\rE(d))\in\mathcal D\). By construction this slice belongs to \(\rC(X_{N\setminus\{k\}})\), which proves the assertion.
	\end{proof}
	
	We can now prove our main theorem.
	
	\begin{proof}[Proof of Theorem~\ref{thm:classification}]
		Let \(\mathcal D\) be an intermediate \(\rC^*\)-subalgebra of \(\mathcal A_N\) containing \(\rC^*_\lambda(\Gamma)\). By Proposition~\ref{prop:dani-ap-upper}, there is a unique \(S\subset N\) such that \(\rC^*(\rE(\mathcal D))=\rC(X_S)\) and \(\mathcal D\subset\mathcal A_S\). If \(S=\varnothing\), then
		\(\rC^*_\lambda(\Gamma)\subset\mathcal D\subset\mathcal A_\varnothing
		=\rC^*_\lambda(\Gamma)\), so \(\mathcal D=\mathcal A_\varnothing\). Moreover,
		Proposition~\ref{prop:dani-ap-upper} gives
		\(\rC^*(\rE(\mathcal D))=\C1\), while
		\(\mathcal D\cap\rC(X_N)=\C1\). Thus the more precise assertion also holds
		in this case. We may therefore assume that \(S\neq\varnothing\).
		
		The \(\rC^*\)-subalgebra \(\mathcal D\cap\rC(X_N)\) is unital and \(\Gamma\)-invariant, since \(\mathcal D\) contains the canonical unitaries. Hence Theorem~\ref{thm:dani-coordinate} gives a unique \(T\subset N\) such that \(\mathcal D\cap\rC(X_N)=\rC(X_T)\).
		We claim that \(T=S\).
		
		First note that \(\mathcal A_S\cap\rC(X_N)=\rC(X_S)\). Indeed, the inclusion from right to left is clear. Conversely, if \(a\in\mathcal A_S\cap\rC(X_N)\), then \(a=\rE(a)\), because \(a\) belongs to the diagonal \(\rC^*\)-algebra \(\rC(X_N)\), while \(\rE(a)\in\rC(X_S)\), because the restriction of \(\rE\) to \(\mathcal A_S\) is its canonical expectation. Since \(\mathcal D\subset\mathcal A_S\), it follows that \(\rC(X_T)\subset\rC(X_S)\), and therefore \(T\subset S\). Indeed, if \(j\in T\setminus S\), a nonconstant function on \(X_j\) belongs to \(\rC(X_T)\) but not to \(\rC(X_S)\).
		
		For the reverse inclusion, fix \(j\in S\). There exists \(f\in\rE(\mathcal D)\) such that \(f\notin\rC(X_{S\setminus\{j\}})\). Otherwise \(\rC^*(\rE(\mathcal D))\subset\rC(X_{S\setminus\{j\}})\), contrary to \(\rC^*(\rE(\mathcal D))=\rC(X_S)\). Choose \(k\in N\setminus\{j\}\), possible since \(n\geq2\).
		
		The pullback of \(\rC(X_{S\setminus\{j\}})\) consists exactly of the functions on \(X_S\) that are constant on every fiber of the coordinate projection onto \(X_{S\setminus\{j\}}\). Since \(f\) does not belong to this pullback, it separates two points of \(X_S\) that agree in every coordinate other than \(j\). Choose common values in the coordinates outside \(S\) and thereby regard them as points \(x^{(0)},x^{(1)}\in X_N\). Let \(\xi_0\in X_k\) be their common \(k\)-th coordinate. For \(\xi\in X_k\), let \(x^{(\ell)}(\xi)\) be obtained from \(x^{(\ell)}\) by replacing its \(k\)-th coordinate by \(\xi\). By continuity, there is a nonempty open neighborhood \(W\subset X_k\) of \(\xi_0\) such that \(f(x^{(0)}(\xi))\neq f(x^{(1)}(\xi))\) for every \(\xi\in W\). Lemma~\ref{lem:coordinate-action-topologically-free} provides a point \(\xi\in W\) with trivial \(\Gamma\)-stabilizer. Lemma~\ref{lem:coordinate-slicing} then gives \(\sigma_{k,\xi}(f)\in\mathcal D\cap\rC(X_N)=\rC(X_T)\).
		This slice separates two points that differ only in their \(j\)-th coordinate. Every function in \(\rC(X_T)\) is independent of that coordinate unless \(j\in T\). Hence \(j\in T\). Since \(j\in S\) was arbitrary, \(S\subset T\), and consequently \(S=T\).
		
		It follows that \(\rC(X_S)=\mathcal D\cap\rC(X_N)\subset\mathcal D\). Together with the canonical unitaries, which already belong to \(\mathcal D\), this generates \(\mathcal A_S\). Thus \(\mathcal A_S\subset\mathcal D\). The reverse inclusion was supplied by Proposition~\ref{prop:dani-ap-upper}, so \(\mathcal D=\mathcal A_S\). Along with the defining equality
		\(\rC^*(\rE(\mathcal D))=\rC(X_S)\), this also proves the more precise assertion.
		
		Each \(\mathcal A_S\) is plainly an intermediate \(\rC^*\)-subalgebra, and the preceding argument proves surjectivity. If \(R\subset S\), then \(\mathcal A_R\subset\mathcal A_S\). Conversely, if \(\mathcal A_R\subset\mathcal A_S\), applying \(\rE\) gives \(\rC(X_R)\subset\rC(X_S)\), which forces \(R\subset S\). Thus the map and its inverse preserve inclusion.
	\end{proof}
	
	\subsection{The real rank-one Lie-group case}
	
	\begin{proof}[Proof of Corollary~\ref{cor:dani-real-rank-one-direct}]
		For every $i$, the space $X_i=G_i/P_i$ is infinite, compact, and metrizable.
		The rank-one Bruhat decomposition implies that $G_i$ acts transitively on
		$X_i^{(2)}$ \cite[Theorem~4.9]{Kn97}.  A regular element in a maximal
		$\R$-split torus has north--south dynamics on $X_i$
		\cite[Sections~3.3 and~3.5]{Be97}. Thus $G_i\curvearrowright X_i$ is a
		rank-one boundary action. The action is faithful because $G_i$ is
		center-free, and irreducibility gives density of $p_i(\Gamma)$ in $G_i$.
		
		We next verify topological freeness. Fix \(i\in N\), put
		\(H=G_{N\setminus\{i\}}\), and let \(K=\ker(p_i|_\Gamma)=\Gamma\cap H\).
		The discrete subgroup \(K<H\) is normalized by the dense subgroup
		\(p_{N\setminus\{i\}}(\Gamma)\). Since its normalizer is closed, \(K\lhd H\).
		For \(k\in K\), the continuous map \(H\to K:h\mapsto hkh^{-1}\) is constant
		because \(H\) is connected and \(K\) is discrete. Hence \(K\subset\mathcal Z(H)\),
		which is trivial, and \(p_i|_\Gamma\) is injective. If a nontrivial element of
		\(G_i\) fixed a nonempty open subset of \(X_i\), the real-analytic identity
		principle would force it to act trivially on all of \(X_i\), contradicting
		faithfulness. Thus every coordinate action \(\Gamma\curvearrowright X_i\) is
		topologically free.
		
		Finally, every \(G_i\) is weakly amenable by \cite[Main theorem]{CH89}, and
		hence has {\rm(AP)} by \cite[Theorem~1.12]{HK94}. The approximation property is
		stable under finite direct products by \cite[Theorem~1.15]{HK94}, so \(G\) has
		{\rm(AP)}. Its lattice \(\Gamma\) has
		{\rm(AP)} by \cite[Theorem~2.4]{HK94}. All the hypotheses of
		Theorem~\ref{thm:classification} are therefore satisfied, and
		Corollary~\ref{cor:dani-real-rank-one-direct} follows.
	\end{proof}
	
	The commutative part of Corollary~\ref{cor:dani-real-rank-one-direct} is also a direct consequence of Dani's
	theorem \cite[Theorem~A]{Da84}, which applies more generally to irreducible
	lattices in semisimple real Lie groups of total real rank at least two.
	
	\section{Further applications to rank-one product groups}
	\label{sec:applications}
	
	We now record two further applications of Theorem~\ref{thm:classification}, to products of trees
	and to mixed products over local fields.
	
	\subsection{Products of trees}
	
	Let $T$ be a locally finite tree with infinitely many ends.  We endow
	$\Aut(T)$ with the compact-open topology and $\partial T$ with the usual end
	topology.  A hyperbolic automorphism $a\in\Aut(T)$ has an axis and two fixed
	ends $a^-,a^+\in\partial T$, and it acts on $\partial T$ with north--south
	dynamics.
	
	\begin{theorem}
		\label{thm:dani-products-trees}
		Let $n\geq2$.  For every $i\in N$, let $T_i$ be a locally finite tree with
		infinitely many ends and let $G_i<\Aut(T_i)$ be a second countable closed
		subgroup. Assume the following conditions.
		\begin{enumerate}[\rm (i)]
			\item $G_i$ acts transitively on ordered pairs of distinct ends of $T_i$.
			\item $G_i$ contains a hyperbolic element.
		\end{enumerate}
		Let $\Gamma<G_1\times\cdots\times G_n$ be a lattice such that, for each $i$,
		the image of $\Gamma$ in $G_i/K_i$ is dense, where
		$K_i=\ker(G_i\curvearrowright\partial T_i)$. Assume,
		in addition, that every coordinate action
		$\Gamma\curvearrowright\partial T_i$ is topologically free. Put
		$X_i=\partial T_i$. Then
		\[
		2^N\longrightarrow
		\left\{\mathcal D\mid
		\rC^*_{\lambda}(\Gamma)\subset\mathcal D\subset
		\rC(X_N)\rtimes_r\Gamma\right\},
		\qquad
		S\longmapsto\rC(X_S)\rtimes_r\Gamma,
		\]
		is an order isomorphism.
	\end{theorem}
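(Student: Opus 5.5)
The plan is to deduce the statement directly from Theorem~\ref{thm:classification}. This requires checking that each \(G_i\curvearrowright X_i=\partial T_i\) is a rank-one boundary action, and that \(\Gamma\) satisfies conditions (i)--(iii) of that theorem. Conditions (i) and (iii) are hypotheses of the present statement, so the work splits into three steps: the topological preliminaries on \(\partial T_i\), the boundary dynamics, and the approximation property.

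First, the topological preliminaries. Since \(T_i\) is locally finite, \(\partial T_i\) is compact and metrizable in the end topology, and by assumption it is infinite. The action of \(\Aut(T_i)\) on \(\partial T_i\) is jointly continuous for the compact-open topology. Indeed, if \(g\) fixes a large ball around a base vertex, then \(g\) moves each end only within a small shadow. Hence the restricted action of the closed subgroup \(G_i\) is continuous. The group \(G_i\) is second countable and locally compact, being closed in the locally compact group \(\Aut(T_i)\).

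Second, the boundary dynamics. Hypothesis (i) is exactly transitivity on \(X_i^{(2)}\). For north--south dynamics, let \(a\in G_i\) be the hyperbolic element from hypothesis (ii), with axis \(\ell\) and translation length \(\tau>0\). Given neighborhoods \(O^-\ni a^-\) and \(V^+\ni a^+\), we may shrink them to shadows of half-axes. Every end outside a shadow \(O^-\) of a far negative vertex of \(\ell\) has its projection to \(\ell\) beyond that vertex. Applying \(a^m\) translates this projection by \(m\tau\) along \(\ell\), so for \(m\) large the end lands in the shadow \(V^+\). Thus \(a\) is a north--south element, and each \(G_i\curvearrowright X_i\) is a rank-one boundary action.

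Third, condition (ii) of Theorem~\ref{thm:classification}, which I expect to be the main obstacle. I would show that \(\Gamma\) has (AP) by passing through the ambient group. The closed subgroup \(G_i<\Aut(T_i)\) acts properly on the locally finite tree \(T_i\). Groups acting properly on trees are weakly amenable with Cowling--Haagerup constant \(1\); this is the Szwarc / Haagerup-type result \cite{Sz91}, and it holds for locally compact groups acting properly on locally finite trees. By \cite[Theorem~1.12]{HK94}, each \(G_i\) therefore has (AP). By \cite[Theorem~1.15]{HK94}, (AP) is stable under finite products, so \(G=G_1\times\cdots\times G_n\) has (AP). By \cite[Theorem~2.4]{HK94}, the lattice \(\Gamma\) inherits (AP).

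The point to check carefully is that the cited tree result applies to closed, possibly nonunimodular, subgroups of \(\Aut(T_i)\) rather than only to discrete groups. If this is unclear, a fallback is to apply it to \(\Aut(T_i)\) itself and then pass to the closed subgroup \(G_i\), since (AP) passes to closed subgroups \cite{HK94}. With all hypotheses verified, Theorem~\ref{thm:classification} gives the stated order isomorphism.
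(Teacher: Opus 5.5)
Your proposal is correct and follows the paper's proof. Both arguments check the rank-one boundary action, namely north--south dynamics for the hyperbolic element with transitivity used to move its fixed ends. Both then obtain (AP) for each \(G_i\) from Szwarc's weak amenability theorem via compact vertex stabilizers, pass to the product and to the lattice by Haagerup--Kraus, and apply Theorem~\ref{thm:classification}. Your additional checks (joint continuity, the explicit shadow argument) and the fallback through \(\Aut(T_i)\) are harmless, since the paper applies Szwarc's theorem directly to the closed subgroup \(G_i\).
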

	
	\begin{proof}
		Each $\partial T_i$ is an infinite compact metrizable space.  A hyperbolic
		element of $G_i$ has north--south dynamics on $\partial T_i$, and transitivity
		on ordered pairs of distinct ends allows its repelling and attracting ends to
		be moved to any prescribed ordered pair. Hence every action
		$G_i\curvearrowright\partial T_i$ is a rank-one boundary action, and the
		density hypothesis is the effective-density assumption of Theorem~\ref{thm:classification}.
		
		For a closed subgroup of the automorphism group of a locally finite tree,
		vertex stabilizers are compact. Szwarc's theorem therefore shows that every
		$G_i$ is weakly amenable \cite[Theorem~6]{Sz91}, hence has {\rm(AP)} by
		\cite[Theorem~1.12]{HK94}. The approximation property is stable under finite
		direct products by \cite[Theorem~1.15]{HK94}, and the lattice $\Gamma$ inherits
		{\rm(AP)}
		\cite[Theorem~2.4]{HK94}. The assumed topological freeness supplies the last
		hypothesis of Theorem~\ref{thm:classification}.
	\end{proof}
	
	\begin{remark}
		\label{rem:tree-topological-freeness}
		Without topological freeness, Theorem~\ref{thm:abstract-topological-factor}
		still gives the commutative classification, but the crossed-product conclusion
		need not follow. Indeed, general tree automorphism groups may contain nontrivial
		elements that fix pointwise the set of ends of a half-tree.

		A useful criterion is available for uniform lattices in products of biregular
		trees. Let \(T_1,\ldots,T_n\) be
		thick locally finite biregular trees and let
		\(\Gamma<\Aut^+(T_1)\times\cdots\times\Aut^+(T_n)\) be a cocompact lattice,
		where \(\Aut^+(T_i)\) is the subgroup preserving the natural bicoloring. Then
		the action \(\Gamma\curvearrowright\partial T_i\) is topologically free if and
		only if the coordinate projection \(p_i|_\Gamma\) is injective.

		Necessity is immediate. Conversely, \cite[Proposition~6.4]{BBHP22} shows that
		injectivity makes the fixed-point set of every nonidentity element null for the
		unique $\Aut^+(T_i)$-invariant boundary measure class. A standard ray measure
		based at a vertex assigns positive mass to every nonempty boundary shadow, so
		this measure class has full support. Every such fixed-point set therefore has
		empty interior.
	\end{remark}

	\begin{corollary}
		Let \(\Gamma\) be one of the finitely presented torsion-free simple
		cocompact lattices constructed in
		\cite[Corollary~5.4, Theorem~5.5 and \S6.5]{BM00b}. Let
		\(d_1,d_2\geq6\) be the associated integers.

		For \(i\in\{1,2\}\), let \(T_i\) be the \(d_i\)-regular tree. Put
		\(G_i=U(\operatorname{Alt}(d_i))^+<\Aut^+(T_i)\). This is the index-two
		type-preserving subgroup of the corresponding Burger--Mozes universal group.
		Then \(\Gamma<G_1\times G_2\).

		Put \(X=\partial T_1\times\partial T_2\). The \(\rC^*\)-subalgebras
		\(\mathcal D\) satisfying
		\(\rC^*_{\lambda}(\Gamma)\subset\mathcal D\subset\rC(X)\rtimes_r\Gamma\)
		are precisely
		\[
		\rC^*_{\lambda}(\Gamma),\qquad
		\rC(\partial T_1)\rtimes_r\Gamma,\qquad
		\rC(\partial T_2)\rtimes_r\Gamma,\qquad
		\rC(X)\rtimes_r\Gamma.
		\]
	\end{corollary}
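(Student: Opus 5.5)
The plan is to deduce the corollary from Theorem~\ref{thm:dani-products-trees} with \(n=2\) and \(X_i=\partial T_i\). Once its hypotheses are checked, the four listed algebras are exactly the images of the four subsets \(\varnothing,\{1\},\{2\},\{1,2\}\) of \(N=\{1,2\}\). The verification has five steps, taken in the following order.

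First, we record the structural facts from \cite{BM00b}. The lattices are constructed as cocompact lattices acting simply transitively on the vertices of \(T_1\times T_2\) with alternating local actions. By construction, \(\Gamma\) lies in \(U(\operatorname{Alt}(d_1))\times U(\operatorname{Alt}(d_2))\), and after passing to type-preserving elements it lies in \(G_1\times G_2\). The corollary asserts this inclusion, and I would justify it by citing the construction. Since \(\Gamma\) is cocompact and \(G_1\times G_2\) is closed of finite index in the product of the universal groups, \(\Gamma\) is a lattice there.

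Second, we check that each \(G_i\) acts as a rank-one boundary action. The local action \(\operatorname{Alt}(d_i)\) is \(2\)-transitive because \(d_i\geq 6\). Burger--Mozes show that \(U(F)\) and \(U(F)^+\) are then transitive on ordered pairs of distinct ends, since they are boundary-\(2\)-transitive. \(G_i\) also contains hyperbolic elements, for instance translations of even length along an axis. Hence conditions (i) and (ii) of Theorem~\ref{thm:dani-products-trees} hold.

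Third, we check density. Burger--Mozes prove that the closures \(\overline{p_i(\Gamma)}\) are locally quasiprimitive and in fact contain \(U(\operatorname{Alt}(d_i))^+\). This is the input they use for irreducibility and the normal subgroup theorem. I would cite that \(\overline{p_i(\Gamma)}\supset G_i\), which gives density in \(G_i\), and a fortiori in \(G_i/K_i\). Here \(K_i\) is trivial, since \(G_i\) acts faithfully on \(\partial T_i\).

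Fourth, we verify topological freeness through Remark~\ref{rem:tree-topological-freeness}. The trees are regular of degree at least \(6\), so they are thick and biregular. \(\Gamma\) is cocompact in \(\Aut^+(T_1)\times\Aut^+(T_2)\). It therefore suffices that \(p_i|_\Gamma\) is injective. The kernel \(\Gamma\cap(\{e\}\times G_2)\) is a normal subgroup of the simple group \(\Gamma\). If it were all of \(\Gamma\), then \(p_1(\Gamma)\) would be trivial, contradicting density. So the kernel is trivial, and injectivity follows from simplicity.

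The main obstacle is the bookkeeping in the first and third steps. One must extract from \cite{BM00b} that the simple lattices land in the index-two subgroups \(G_i\) and that their projections are dense in \(G_i\) itself, not only in a larger closed subgroup of \(\Aut(T_i)\). Once these citations are in place, Theorem~\ref{thm:dani-products-trees} gives the order isomorphism \(2^{\{1,2\}}\cong\{\mathcal D\}\), which yields exactly the four listed intermediate algebras.
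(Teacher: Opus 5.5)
Your proposal is correct and follows essentially the same route as the paper: density of \(p_i(\Gamma)\) in \(G_i\) from \cite[Theorem~5.5]{BM00b}, transitivity on ordered pairs of ends from the local \(2\)-transitivity of \(\operatorname{Alt}(d_i)\), injectivity of \(p_i|_\Gamma\) from simplicity of \(\Gamma\), and topological freeness via Remark~\ref{rem:tree-topological-freeness}, after which Theorem~\ref{thm:dani-products-trees} yields the four listed algebras. The differences are cosmetic: you spell out why \(\Gamma\) is a lattice in \(G_1\times G_2\) and why \(K_i\) is trivial, which the paper leaves implicit.
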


	\begin{proof}
		The Burger--Mozes results show that \(p_i(\Gamma)\) is dense in \(G_i\) for
		\(i=1,2\) \cite[Theorem~5.5]{BM00b}. The universal groups and their index-two
		subgroups are described in
		\cite[Section~3.2 and Proposition~3.2.1]{BM00a}. Since
		\(\operatorname{Alt}(d_i)\) is \(2\)-transitive,
		\(U(\operatorname{Alt}(d_i))\) is locally \(\infty\)-transitive by
		\cite[Section~3.2]{BM00a}. Its plus subgroup \(G_i\) retains this property by
		\cite[Proposition~3.1.2(2)]{BM00a}, and hence acts transitively on ordered pairs
		of distinct ends by \cite[Lemma~3.1.1]{BM00a}. It also contains hyperbolic
		automorphisms.

		Each coordinate projection \(p_i|_\Gamma\) is nontrivial and hence injective,
		because its kernel is normal and \(\Gamma\) is simple. By construction,
		\(\Gamma\) acts freely and cocompactly on \(T_1\times T_2\), and hence is a
		cocompact lattice in \(\Aut^+(T_1)\times\Aut^+(T_2)\). Therefore,
		Remark~\ref{rem:tree-topological-freeness} shows that both
		coordinate boundary actions are topologically free. All the hypotheses of
		Theorem~\ref{thm:dani-products-trees} are therefore satisfied. The corresponding
		coordinate \(\rC^*\)-subalgebras are exactly the four displayed above.
	\end{proof}
	
	\subsection{Products of rank-one algebraic groups over local fields}
	
	For each $i\in N$, let $k_i$ be a local field and let $\mathbf G_i$ be a
	connected adjoint $k_i$-simple algebraic $k_i$-group of $k_i$-rank one. Put
	$G_i=\mathbf G_i(k_i)^+$, where $\mathbf G_i(k_i)^+$ is the subgroup generated
	by $\mathbf U(k_i)$ as $\mathbf U$ ranges over the unipotent $k_i$-split
	subgroups of $\mathbf G_i$ \cite[Proposition~I.5.4(i)]{Ma91}.
	Choose a minimal $k_i$-parabolic subgroup $\mathbf P_i<\mathbf G_i$, put
	$P_i=G_i\cap\mathbf P_i(k_i)$ and set $X_i=G_i/P_i$. The equality
	$\mathbf G_i(k_i)=G_i\mathbf P_i(k_i)$ identifies $X_i$ with the compact
	rank-one spherical building $\mathbf G_i(k_i)/\mathbf P_i(k_i)$
	\cite[Proposition~I.5.4(vi)]{Ma91}.
	When $k_i$ is non-archimedean, the reduced Bruhat--Tits building of $G_i$ is a
	locally finite tree and $X_i$ identifies $G_i$-equivariantly with its space of
	ends \cite{BT72}.
	
	\begin{theorem}
		\label{thm:dani-mixed-local-fields}
		Let $n\geq2$, put $G=\prod_{i\in N}G_i$ and let $\Gamma<G$ be an
		irreducible lattice, meaning that its projection to every nonempty proper
		subproduct is dense. Then
		\[
		2^N\longrightarrow
		\left\{\mathcal D\mid
		\rC^*_{\lambda}(\Gamma)\subset\mathcal D\subset
		\rC(X_N)\rtimes_r\Gamma\right\},
		\qquad
		S\longmapsto\rC(X_S)\rtimes_r\Gamma,
		\]
		is an order isomorphism.
	\end{theorem}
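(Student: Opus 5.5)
The plan is to verify the hypotheses of Theorem~\ref{thm:classification} for $G_i=\mathbf G_i(k_i)^+$ acting on $X_i=G_i/P_i$, and then apply it directly. The checks are the rank-one boundary axioms, effective density, (AP), and topological freeness of each coordinate action.

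\emph{Rank-one boundary action.} First, $X_i$ is compact, metrizable and infinite, since it is a rank-one spherical building over a nondiscrete local field. By the Bruhat decomposition $\mathbf G_i(k_i)=\mathbf P_i(k_i)\sqcup \mathbf U_i(k_i)w\mathbf P_i(k_i)$, the stabilizer of the base point acts transitively on the remaining points. Since $\mathbf G_i(k_i)=G_i\mathbf P_i(k_i)$, the same holds for $G_i$, using that $\mathbf U_i(k_i)\subset G_i$. This gives transitivity on $X_i^{(2)}$. For north--south dynamics, I would take a regular element of a maximal $k_i$-split torus lying in $G_i$. It contracts the opposite big cell $\mathbf U_i^-(k_i)$ onto the attracting fixed point. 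In the non-archimedean case this is just a hyperbolic automorphism of the Bruhat--Tits tree acting on its ends.

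\emph{Effective density.} Irreducibility gives density of $p_i(\Gamma)$ in $G_i$ whenever $n\geq2$, because $\{i\}$ is a nonempty proper subproduct. \emph{(AP).} Each $G_i$ is weakly amenable: in the archimedean case by \cite{CH89}, and in the non-archimedean case by \cite{Sz91}, since $G_i$ acts properly on a locally finite tree. Hence each $G_i$ has (AP) by \cite[Theorem~1.12]{HK94}, and so do $G$ and $\Gamma$ by \cite[Theorems~1.15 and~2.4]{HK94}. \emph{Faithfulness.} $\mathbf G_i$ is adjoint and $G_i$ is simple modulo its center by Tits' simplicity theorem, so the kernel $K_i$ of the action is central in $\mathbf G_i(k_i)$ and therefore trivial.

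\emph{Topological freeness.} I expect this to be the main obstacle. Two facts are needed.

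First, $p_i|_\Gamma$ must be injective. The kernel $\Gamma\cap G_{N\setminus\{i\}}$ is a discrete subgroup normalized by the dense subgroup $p_{N\setminus\{i\}}(\Gamma)$, so it is normal in $G_{N\setminus\{i\}}$. In the archimedean factors, the connectedness argument from the proof of Corollary~\ref{cor:dani-real-rank-one-direct} does not apply to totally disconnected factors. Instead I would use that a discrete normal subgroup of a product of groups $\mathbf H(k)^+$ is central. For each factor, its projection is normalized by $G_j$, and $G_j$ is topologically simple modulo the center, with no nontrivial discrete normal subgroup outside the center. Adjointness then kills the center.

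Second, no nontrivial element of $G_i$ may fix a nonempty open subset of $X_i$. For real or complex $k_i$ this follows from real-analyticity, as in the real case. For $k_i$ non-archimedean, I would use algebraicity. An open subset of $X_i$ contains an open subset of the big cell $\mathbf U_i^-(k_i)$, which is Zariski dense in $\mathbf G_i/\mathbf P_i$. An element fixing it pointwise therefore fixes the whole variety, so it lies in $\bigcap_g g\mathbf P_ig^{-1}$, which is central and hence trivial by adjointness.

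Combining the two facts, each coordinate action of $\Gamma$ is topologically free, and Theorem~\ref{thm:classification} yields the order isomorphism.
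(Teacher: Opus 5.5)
\textbf{The injectivity step has a gap.} You reduce to Theorem~\ref{thm:classification} exactly as the paper does, and the Bruhat, density and (AP) checks are fine. The problem is your argument that $p_i|_\Gamma$ is injective. You project $K=\Gamma\cap G_{N\setminus\{i\}}$ to each factor and appeal to the absence of nontrivial discrete normal subgroups in $G_j$. But $p_j(K)$ is only known to be a normal subgroup of $G_j$, not a discrete one. Projections of discrete subgroups of a product need not be discrete or closed; already $p_j(\Gamma)$ is dense. So topological simplicity only tells you that $p_j(K)$ is trivial or dense, which gives no contradiction.

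The missing idea is to intersect instead of project. The subgroup $K\cap G_j$ is discrete, hence closed, and normal in the nondiscrete topologically simple group $G_j$, so it is trivial. For $t\in K$ and $h\in G_j$, normality of both $K$ and $G_j$ in $G_{N\setminus\{i\}}$ gives $[t,h]\in K\cap G_j=\{e\}$. Thus $t$ centralizes every factor, and $t=e$ because each $G_j$ is center-free. This is the paper's argument. If you prefer to keep projections, you need Tits' \emph{abstract} simplicity together with countability: $p_j(K)$ is a countable normal subgroup of the uncountable simple group $G_j$, hence trivial.

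\textbf{Your route for fixed points is different and valid.} To show that no nontrivial element of $G_i$ fixes an open set, the paper invokes \cite[Lemma~6.2]{BBHP22}: the fixed set of a noncentral element is null for the invariant measure class, which has full support. Your algebraic argument avoids this measure-theoretic input and works uniformly over every local field, so the separate analyticity argument for archimedean factors is unnecessary. You should state the density you actually need. It is not enough that the big cell is Zariski dense. A nonempty open subset of $\mathbf U_i^-(k_i)\cong k_i^d$ must itself be Zariski dense, which holds because a polynomial vanishing on a product of infinite sets is zero.

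\textbf{A smaller omission.} You assert without justification that a regular element of a maximal split torus lies in $G_i=\mathbf G_i(k_i)^+$, rather than merely in $\mathbf G_i(k_i)$. The paper obtains such an element from the split rank-one subgroup $\mathbf F_i$ generated by root groups \cite[Theorem~7.2]{BT65}.
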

	
	\begin{proof}
		Fix $i\in N$ and let $x_i^0=P_i\in X_i$. The equality
		$\mathbf G_i(k_i)=G_i\mathbf P_i(k_i)$ makes $G_i$ transitive on $X_i$. If
		$\mathbf U_i$ is the unipotent radical of $\mathbf P_i$, then
		$\mathbf U_i(k_i)\subset G_i\cap\mathbf P_i(k_i)$ and the rank-one Bruhat
		decomposition shows that $\mathbf U_i(k_i)$ acts simply transitively on
		$X_i\setminus\{x_i^0\}$ \cite[Theorem~5.15]{BT65}. Hence $G_i$ acts
		transitively on ordered pairs of distinct points of $X_i$.

		Let $\mathbf S_i<\mathbf G_i$ be a maximal $k_i$-split torus. By
		\cite[Theorem~7.2]{BT65}, there is a connected $k_i$-split reductive rank-one
		subgroup $\mathbf F_i<\mathbf G_i$ containing $\mathbf S_i$ and generated by
		its relative root groups. Hence $\mathbf F_i(k_i)^+\subset G_i$, and the usual
		split rank-one relations provide
		$a_i\in\mathbf F_i(k_i)^+\cap\mathbf S_i(k_i)$ such that
		$|\alpha_i(a_i)|\neq1$ for the positive relative root $\alpha_i$. If $k_i$ is
		archimedean, after replacing $a_i$ by its inverse when necessary, it lies in
		the interior of a positive chamber and has north--south dynamics on $X_i$
		\cite[Sections~3.3 and~3.5]{Be97}. If $k_i$ is non-archimedean, it has nonzero
		translation length on the reduced Bruhat--Tits tree and hence has north--south
		dynamics on its end boundary \cite{BT72}. Thus every factor is a rank-one
		boundary action, and irreducibility gives density of every coordinate
		projection.

		We next verify topological freeness. Fix $i\in N$, put
		$H=G_{N\setminus\{i\}}$ and let $K=\ker(p_i|_\Gamma)=\Gamma\cap H$.
		The subgroup $K<H$ is discrete and is normalized by the dense subgroup
		$p_{N\setminus\{i\}}(\Gamma)$. Since its normalizer is closed, $K\lhd H$.
		By Tits' simplicity theorem, every $G_j$ is nondiscrete and topologically
		simple \cite[Theorem~I.1.5.6(i)]{Ma91}. For $j\neq i$, the group
		$K\cap G_j$ is a discrete closed normal subgroup of $G_j$, and hence is
		trivial. If $t\in K$ and $h\in G_j$, then $[t,h]\in K\cap G_j$. Thus $t$
		centralizes every factor of $H$. Since each $G_j$ is nonabelian and
		topologically simple, it is center-free. Thus $t=e$. Consequently, every
		coordinate projection $p_i|_\Gamma$ is
		injective.

		Let $\gamma\in\Gamma\setminus\{e\}$. Then $p_i(\gamma)$ is noncentral in
		$\mathbf G_i(k_i)$. By \cite[Lemma~6.2]{BBHP22}, its fixed-point set in
		$X_i$ is null for the invariant Radon measure class. This measure class has
		full support, so the fixed-point set has empty interior. Hence every
		coordinate action $\Gamma\curvearrowright X_i$ is topologically free.
		
		Every archimedean factor is weakly amenable by \cite[Main theorem]{CH89}. For
		a non-archimedean factor, the
		action on the locally finite Bruhat--Tits tree has compact vertex stabilizers,
		so weak amenability follows from \cite[Theorem~6]{Sz91}. Thus every factor has
		{\rm(AP)} by \cite[Theorem~1.12]{HK94}. The approximation property is stable
		under finite direct products by \cite[Theorem~1.15]{HK94}, so $G$ has {\rm(AP)}.
		The lattice $\Gamma$
		inherits {\rm(AP)} by \cite[Theorem~2.4]{HK94}.
		Theorem~\ref{thm:classification} now applies.
	\end{proof}
	
	\begin{example}
		Let $p$ be a prime and diagonally embed $\Gamma=\PSL_2(\Z[1/p])$ in
		$\PSL_2(\R)\times\PSL_2(\Q_p)$.
		These two ambient groups are the plus groups associated with the adjoint
		algebraic group $\PGL_2$.
		The $S$-arithmetic Borel--Harish-Chandra theorem makes $\Gamma$ a lattice
		\cite[Theorem~I.3.2.5]{Ma91}, and strong approximation makes its two projections
		dense \cite[\S II.6.8]{Ma91}. In particular, the lattice is
		irreducible, so Theorem~\ref{thm:dani-mixed-local-fields} applies. For the
		boundary action on
		$X=\mathbb P^1(\R)\times\mathbb P^1(\Q_p)$, the intermediate
		$\rC^*$-subalgebras are exactly
		\[
		\rC^*_\lambda(\Gamma),\qquad
		\rC(\mathbb P^1(\R))\rtimes_r\Gamma,\qquad
		\rC(\mathbb P^1(\Q_p))\rtimes_r\Gamma,\qquad
		\rC(X)\rtimes_r\Gamma.
		\]
		The same proof applies when
		$\PSL_2\left(\Z\left[1/(p_1\cdots p_r)\right]\right)$ is diagonally embedded in
		$\PSL_2(\R)\times\prod_{j=1}^r\PSL_2(\Q_{p_j})$, where
		$p_1,\ldots,p_r$ are distinct primes. Its boundary has $r+1$
		coordinates, and there are exactly $2^{r+1}$ intermediate $\rC^*$-subalgebras.
	\end{example}
	
	\begin{remark}
		The projective quotient in the preceding example is essential. For
		$\SL_2(\Z[1/p])$, the central element $-I$ acts trivially on both boundary
		coordinates. Write
		\(\widetilde{\mathcal A}_S=\rC(X_S)\rtimes_r\SL_2(\Z[1/p])\) and put
		\(e_\pm=(1\pm u_{-I})/2\). For \(S\neq T\),
		\(e_+\widetilde{\mathcal A}_S\oplus
		e_-\widetilde{\mathcal A}_T\) is an intermediate \(\rC^*\)-subalgebra but is not listed by a single
		subset of \(N\). This is the reduced crossed-product analogue of the von Neumann
		algebra obstruction noted in \cite[Remark following the proof of Theorem~A]{BH23}.
	\end{remark}
	
	\section{ITAP as a necessary condition}\label{sec:scalar-itap}
	
	We conclude with an obstruction to extending Theorem~\ref{thm:classification} beyond products of rank-one groups. Although the ambient groups in Corollary~\ref{cor:dani-real-rank-one-direct} can have higher total real rank, their rank-one factors ensure that the lattices considered there have {\rm(AP)}, and hence {\rm ITAP}. For general higher-rank lattices, including lattices in simple higher-rank Lie groups, the case of scalar expectation in the noncommutative Dani problem is governed precisely by the invariant translation approximation property.
	
	\subsection{A general observation}
	
	Let \(\Lambda\) be a countable discrete group acting on a compact space \(X\), put \(\mathcal A=\rC(X)\rtimes_r\Lambda\), and denote by \(\rE:\mathcal A\to\rC(X)\) the canonical expectation. Suppose that \(x_0\in X\) has dense \(\Lambda\)-orbit. Define a covariant representation on \(\ell^2(\Lambda)\) by \(\pi_{x_0}(f)\delta_t=f(tx_0)\delta_t\) and \(\pi_{x_0}(u_s)=\lambda_s\).
	Here our action convention is \((s\cdot f)(x)=f(s^{-1}x)\), and covariance follows from
	\[
	\lambda_s\pi_{x_0}(f)\lambda_s^*\delta_t
	=f(s^{-1}tx_0)\delta_t
	=\pi_{x_0}(s\cdot f)\delta_t.
	\]
	This is the regular covariant representation induced by evaluation at \(x_0\). Indeed, let \(\sigma\) be a faithful representation of \(\rC(X)\). The regular representation induced by \(\sigma\oplus\operatorname{ev}_{x_0}\) is the direct sum of the one induced by \(\sigma\) and \(\pi_{x_0}\). Since \(\sigma\oplus\operatorname{ev}_{x_0}\) is faithful, \cite[Proposition~4.1.5]{BO08} gives
	\[
		\|\pi_{x_0}(p)\|\leq\|p\|_r
	\]
	for every finite Fourier polynomial \(p\). Thus \(\pi_{x_0}\) integrates to a representation of \(\mathcal A\). It acts on \(\ell^2(\Lambda)\), independently of the stabilizer of \(x_0 \in X\).
	
	\begin{lemma}\label{lem:itap-orbit-representation}
		The integrated representation \(\pi_{x_0}:\mathcal A\to\rB(\ell^2(\Lambda))\) is faithful. For every \(a\in\mathcal A\) and \(r,t\in\Lambda\),
		\begin{equation}\label{eq:itap-matrix-coefficient}
			\langle\pi_{x_0}(a)\delta_t,\delta_r\rangle
			=
			\rE(au_{rt^{-1}}^*)(rx_0).
		\end{equation}
		Consequently,
		\begin{equation}\label{eq:itap-boundary-intersection}
			\pi_{x_0}\bigl(\{a\in\mathcal A\mid
			\rE(au_s^*)\in\C1\text{ for every }s\in\Lambda\}\bigr)
			=
			\pi_{x_0}(\mathcal A)\cap\rL(\Lambda).
		\end{equation}
	\end{lemma}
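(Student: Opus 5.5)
I will first verify the matrix-coefficient formula \eqref{eq:itap-matrix-coefficient} on finite Fourier polynomials and then extend it by continuity. For \(a=fu_s\), the definitions give
\[
\pi_{x_0}(fu_s)\delta_t=\pi_{x_0}(f)\delta_{st}=f(stx_0)\delta_{st}.
\]
Hence \(\langle\pi_{x_0}(fu_s)\delta_t,\delta_r\rangle=\delta_{r,st}f(rx_0)\). On the other side, \(\rE(fu_su_{rt^{-1}}^*)=\delta_{s,rt^{-1}}f\), and evaluating at \(rx_0\) gives the same expression. Both sides are linear in \(a\), and both are norm continuous: the left side because \(\|\pi_{x_0}(p)\|\leq\|p\|_r\), and the right side because \(\rE\), right multiplication by a unitary, and point evaluation are contractive. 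Since finite Fourier polynomials are dense in \(\mathcal A\), the formula holds for all \(a\in\mathcal A\).

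Faithfulness follows from this formula together with the density of the orbit. Suppose \(\pi_{x_0}(a)=0\). Then for every \(s\in\Lambda\), the formula with \(rt^{-1}=s\) shows that the continuous function \(\rE(au_s^*)\) vanishes at every point \(rx_0\), \(r\in\Lambda\). Because the orbit \(\Lambda x_0\) is dense, \(\rE(au_s^*)=0\) for every \(s\). In particular \(\rE(a^*a)=0\), since \(a^*a\) also lies in the kernel of \(\pi_{x_0}\). The canonical expectation on the reduced crossed product is faithful, so \(a=0\). Applying the argument to \(a^*a\) directly sidesteps any need to argue that vanishing Fourier coefficients force \(a=0\), which would otherwise require (AP).

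For \eqref{eq:itap-boundary-intersection} I will use the description of \(\rL(\Lambda)\) as the commutant of the right regular representation. An operator \(T\) on \(\ell^2(\Lambda)\) lies in \(\rL(\Lambda)\) exactly when its matrix entries \(\langle T\delta_t,\delta_r\rangle\) depend only on \(rt^{-1}\). If every \(\rE(au_s^*)\) equals a scalar \(c_s\), the formula gives entries \(c_{rt^{-1}}\), so \(\pi_{x_0}(a)\) commutes with every \(\rho_g\) and lies in \(\rL(\Lambda)\). Conversely, suppose \(\pi_{x_0}(a)\in\rL(\Lambda)\). Then \(\rE(au_s^*)(rx_0)\) depends only on \(s\), not on the point \(rx_0\): to see this, fix \(s\) and vary \(r\), setting \(t=s^{-1}r\). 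So \(\rE(au_s^*)\) is constant on the dense orbit, and by continuity it is a scalar.

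The only delicate points are the commutant characterization of \(\rL(\Lambda)\) and the use of faithfulness of \(\rE\). The latter must be applied to the positive element \(a^*a\) rather than to \(a\) itself.
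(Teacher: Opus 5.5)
Your proposal is correct and follows the paper's proof essentially step for step. The paper likewise checks the matrix-coefficient formula on finite Fourier polynomials and extends it by continuity, obtains faithfulness from \(\rE(a^*a)\) vanishing on the dense orbit together with faithfulness of \(\rE\), and proves the intersection identity via \(\rL(\Lambda)=\rho(\Lambda)'\) exactly as you do.

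One side remark of yours is inaccurate, though harmless: vanishing of all Fourier coefficients forces \(a=0\) in any reduced crossed product without {\rm(AP)}. The reason is that the matrix entries of \(a\) in the regular representation are determined by its Fourier coefficients. {\rm(AP)} is only needed for Suzuki's reconstruction, that is, for membership in \(\mathcal C_0\rtimes_r\Lambda\) when all coefficients lie in a subalgebra \(\mathcal C_0\).
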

	
	\begin{proof}
		For a finite Fourier polynomial \(p=\sum_g f_gu_g\), one has
		\[
		\langle\pi_{x_0}(p)\delta_t,\delta_r\rangle
		=f_{rt^{-1}}(rx_0)
		=\rE(pu_{rt^{-1}}^*)(rx_0).
		\]
		Both sides depend continuously on \(p\), which proves
		\eqref{eq:itap-matrix-coefficient}. In particular, for \(a\in\mathcal A_+\)
		and \(t\in\Lambda\), the case \(r=t\) gives
		\[
		\langle\pi_{x_0}(a)\delta_t,\delta_t\rangle=\rE(a)(tx_0).
		\]
		If \(\pi_{x_0}(a)=0\), density of \(\Lambda x_0\) gives \(\rE(a)=0\).
		Faithfulness of the canonical expectation then gives \(a=0\). If
		\(\pi_{x_0}(a)=0\) for arbitrary \(a\in\mathcal A\), then
		\(\pi_{x_0}(a^*a)=0\), so the positive case gives \(a^*a=0\) and hence
		\(a=0\). Thus \(\pi_{x_0}\) is faithful.

		If \(\rho_q\delta_t=\delta_{tq^{-1}}\), then
		\[
		T\in\rL(\Lambda)=\rho(\Lambda)'
		\quad\Longleftrightarrow\quad
		\langle T\delta_t,\delta_r\rangle=c_{rt^{-1}}
		\quad(r,t\in\Lambda)
		\]
		for a scalar family \((c_s)_{s\in\Lambda}\). Thus scalar Fourier coefficients imply that \(\pi_{x_0}(a)\in\rL(\Lambda)\). Conversely, if \(\pi_{x_0}(a)\in\rL(\Lambda)\), fix \(s\in\Lambda\) and set \(t=s^{-1}r\) in \eqref{eq:itap-matrix-coefficient}. Then \(\rE(au_s^*)(rx_0)\) is independent of \(r\), so \(\rE(au_s^*)\) is constant on the dense orbit \(\Lambda x_0\), and hence on \(X\). This proves \eqref{eq:itap-boundary-intersection}.
	\end{proof}
	
	We use the convention
	\[
	\rC_u^*(\Lambda)
	=
	\overline{\operatorname{span}}
	\{M_b\lambda_s\mid b\in\ell^\infty(\Lambda),\ s\in\Lambda\}
	\subset\rB(\ell^2(\Lambda)),
	\]
	where \(M_b\) denotes multiplication by \(b\). For the left-translation action of \(\Lambda\) on \(\ell^\infty(\Lambda)\), this is precisely the canonical reduced-crossed-product representation, and hence
	\(\rC_u^*(\Lambda)=\ell^\infty(\Lambda)\rtimes_r\Lambda\).
	Equivalently, \(\rC_u^*(\Lambda)\) is the norm closure of the operators supported
	on finitely many sets
	\[
	\Delta_s=\{(r,t)\in\Lambda\times\Lambda\mid r=st\}
	=\{(r,t)\in\Lambda\times\Lambda\mid rt^{-1}=s\},
	\qquad s\in\Lambda.
	\]
	Each \(\Delta_s\) is the graph of left translation by \(s\) and is invariant
	under simultaneous right translation of both coordinates. We call these sets the
	right-invariant diagonals. The group \(\Lambda\) has the invariant translation
	approximation property, abbreviated ITAP \cite{Za06}, when
	\begin{equation}
		\rC_u^*(\Lambda)\cap\rL(\Lambda)
		=
		\rC^*_{\lambda}(\Lambda).
	\end{equation}
	With the right regular representation \(\rho\) as above, one has \(\rC_u^*(\Lambda)\cap\rL(\Lambda)=\rC_u^*(\Lambda)^{\Ad(\rho(\Lambda))}\).
	
	By definition, \(\pi_{x_0}(f)\) is the multiplication operator \(M_{b_f}\), where \(b_f(t)=f(tx_0)\), while \(\pi_{x_0}(u_s)=\lambda_s\). Thus \(\pi_{x_0}(fu_s)=M_{b_f}\lambda_s\in\rC_u^*(\Lambda)\) for every \(f\in\rC(X)\) and \(s\in\Lambda\). Since finite Fourier sums are dense in \(\mathcal A\), it follows that \(\pi_{x_0}(\mathcal A)\subset\rC_u^*(\Lambda)\). The faithfulness established in Lemma~\ref{lem:itap-orbit-representation} makes this an embedding.

	For comparison, Suzuki shows that for every exact discrete group \(\Lambda\), an
	intermediate \(\rC^*\)-subalgebra between \(\rC^*_{\lambda}(\Lambda)\) and
	\(\rC_u^*(\Lambda)\cap\rL(\Lambda)\) can be realized as a decreasing intersection
	of \(\rC^*\)-algebras isomorphic to \(\mathcal O_2\)
	\cite[Theorem~A]{Su17}. The point of the next proposition is that topological
	amenability together with a dense orbit forces saturation of this upper bound.
	
	\begin{proposition}\label{prop:amenable-orbit-saturation}
		Suppose that the action \(\Lambda\curvearrowright X\) is topologically amenable and that \(\Lambda x_0\) is dense in \(X\). Then, inside \(\rB(\ell^2(\Lambda))\),
		\begin{equation}
			\pi_{x_0}(\mathcal A)\cap\rL(\Lambda)
			=
			\rC_u^*(\Lambda)\cap\rL(\Lambda).
		\end{equation}
	\end{proposition}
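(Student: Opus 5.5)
The inclusion "\(\subset\)" is immediate from \(\pi_{x_0}(\mathcal A)\subset\rC_u^*(\Lambda)\), established just before the statement. For the reverse inclusion, fix \(T\in\rC_u^*(\Lambda)\cap\rL(\Lambda)\). The goal is to produce \(a\in\mathcal A\) with \(\pi_{x_0}(a)=T\). By Lemma~\ref{lem:itap-orbit-representation}, such an \(a\) must have scalar Fourier coefficients \(\rE(au_s^*)=c_s1\), where \(c_s=\langle T\delta_t,\delta_{st}\rangle\) is independent of \(t\) because \(T\in\rL(\Lambda)\). The task is therefore to show that the formal series \(\sum_s c_s u_s\), which has scalar coefficients and already defines a bounded operator \(T\) on \(\ell^2(\Lambda)\), actually lies in \(\mathcal A\).

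The plan is to use topological amenability to manufacture Fourier multipliers on \(\mathcal A\). Since \(\Lambda\curvearrowright X\) is topologically amenable, there are finitely supported continuous maps \(\zeta_k:X\to\ell^2(\Lambda)\) with \(\|\zeta_k(x)\|=1\) and \(\sup_x\|\zeta_k(sx)-s\zeta_k(x)\|\to0\) for each \(s\). Put \(h_k(s)(x)=\langle\zeta_k(x),s\zeta_k(s^{-1}x)\rangle\in\rC(X)\) (with the appropriate convention). Each \(h_k(s)\) is supported on a finite set \(F_k\), and \(h_k(s)\to1\) uniformly on \(X\) for every \(s\). The standard Herz--Schur construction \cite[Section~4.3]{BO08} then gives unital completely positive maps \(\Phi_k\) defined on both \(\ell^\infty(\Lambda)\rtimes_r\Lambda\) and \(\rC(X)\rtimes_r\Lambda\) by \(bu_s\mapsto b\,h_k(s)u_s\) (up to convention). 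These maps converge pointwise in norm to the identity and take values in finite Fourier sums with coefficients in the respective coefficient algebra.

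The key step is to transport this to \(\rB(\ell^2(\Lambda))\) through \(\pi_{x_0}\). Each \(h_k(s)\) pulls back along the orbit map \(t\mapsto tx_0\) to \(\tilde h_k(s)\in\ell^\infty(\Lambda)\). This defines a cp multiplier \(\tilde\Phi_k\) on \(\rC_u^*(\Lambda)\), given by \(M_b\lambda_s\mapsto M_b M_{\tilde h_k(s)}\lambda_s\), which satisfies \(\tilde\Phi_k\circ\pi_{x_0}=\pi_{x_0}\circ\Phi_k\). It is realized as \(V_k^*(T\otimes1)V_k\), where \(V_k\delta_t=\delta_t\otimes\zeta_k(tx_0)\), so it is contractive. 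It converges to the identity on \(\rC_u^*(\Lambda)\): this holds on the generators \(M_b\lambda_s\) because \(\tilde h_k(s)\to1\) uniformly, and extends by density and uniform boundedness. Now apply \(\tilde\Phi_k\) to \(T\). Since \(T\) has constant diagonal entries \(c_s\), we get \(\tilde\Phi_k(T)=\sum_{s\in F_k}c_s M_{\tilde h_k(s)}\lambda_s=\pi_{x_0}\bigl(\sum_{s\in F_k}c_s h_k(s)u_s\bigr)\in\pi_{x_0}(\mathcal A)\). Hence \(T=\lim_k\tilde\Phi_k(T)\) lies in the norm closure of \(\pi_{x_0}(\mathcal A)\). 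That image is closed, being the image of a C\(^*\)-algebra under a \(*\)-homomorphism, and this gives the conclusion.

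The main obstacle is checking that \(\tilde\Phi_k\) really is the compression formula, so that it is a ucp map on all of \(\rC_u^*(\Lambda)\) and not merely defined on finite sums. Concretely, the matrix coefficient of \(V_k^*(T\otimes1)V_k\) at \((st,t)\) must equal \(T_{st,t}\,\langle\zeta_k(tx_0),\zeta_k(stx_0)\rangle\). This entry is not a priori a function of \(s\) times a function of the row alone, so \(h_k\) should be defined as the function \(x\mapsto\langle\zeta_k(s^{-1}x),\zeta_k(x)\rangle\) evaluated at \(stx_0\), with the conventions adjusted accordingly. Once that bookkeeping is fixed, the uniform convergence \(h_k(s)\to1\) follows from approximate equivariance, since \(\langle\zeta_k(s^{-1}x),\zeta_k(x)\rangle\) is close to \(\langle\zeta_k(s^{-1}x),s\zeta_k(s^{-1}x)\rangle\). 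Careful verification of this convention is the step I would do first. Note also that density of the orbit is used only through the faithfulness of \(\pi_{x_0}\) and the constancy of coefficients supplied by Lemma~\ref{lem:itap-orbit-representation}.
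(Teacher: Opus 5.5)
Your strategy is the same as the paper's. You pull back approximately equivariant unit-vector fields along the orbit map to get Schur multipliers that converge to the identity on \(\rC_u^*(\Lambda)\) and send \(\rC_u^*(\Lambda)\cap\rL(\Lambda)\) to finite Fourier sums in \(\pi_{x_0}(\mathcal A)\). However, the concrete realization you commit to, and then defend in your last paragraph, does not work.

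With \(V_k\delta_t=\delta_t\otimes\zeta_k(tx_0)\), the \((st,t)\)-entry of \(V_k^*(T\otimes1)V_k\) is \(T_{st,t}\) times \(\langle\zeta_k(s^{-1}x),\zeta_k(x)\rangle\) at \(x=stx_0\). Approximate equivariance only says this is close to \(\langle\eta,\lambda_s\eta\rangle\) with \(\eta=\zeta_k(s^{-1}x)\), and that quantity is not close to \(1\). Suppose the untwisted multipliers converged to the identity on each \(\lambda_s\). Looking at the \((s,e)\)-entry and using \(\zeta_k(sx_0)\approx\lambda_s\zeta_k(x_0)\) would give \(\langle\zeta_k(x_0),\lambda_s\zeta_k(x_0)\rangle\to1\) for every \(s\). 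That means almost invariant unit vectors for \(\lambda\), so \(\Lambda\) would be amenable. The untwisted kernel also need not vanish off finitely many diagonals. Both \(\zeta_k(tx_0)\) and \(\zeta_k(rx_0)\) lie in \(\ell^2(F_k)\) and can overlap for arbitrary \(rt^{-1}\), so \(\tilde\Phi_k(T)\) would not even be a finite Fourier sum. Your claim that \(h_k(s)\to1\) uniformly once the bookkeeping is adjusted is therefore false, and this is exactly the step the whole argument rests on.

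The missing idea is the twist by \(\lambda_{t^{-1}}\). Use \(V_k\delta_t=\delta_t\otimes\lambda_{t^{-1}}\zeta_k(tx_0)\), which is precisely the paper's choice \(v_i(r)=\lambda_{r^{-1}}\xi_i(rx_0)\). Then the \((r,t)\)-entry of the compression is \(T_{r,t}\langle\lambda_s\zeta_k(s^{-1}x),\zeta_k(x)\rangle\) with \(s=rt^{-1}\) and \(x=rx_0\). Up to complex conjugation, this is your \emph{original} \(h_k(s)(x)=\langle\zeta_k(x),\lambda_s\zeta_k(s^{-1}x)\rangle\) evaluated at \(rx_0\). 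It vanishes unless \(sF_k\cap F_k\neq\varnothing\), that is, unless \(s\in F_kF_k^{-1}\), and it tends to \(1\) uniformly by approximate equivariance.

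With this correction the remaining steps go through and coincide with the paper's proof:
\begin{enumerate}
\item convergence on the generators \(M_b\lambda_s\), then on all of \(\rC_u^*(\Lambda)\) by density and contractivity;
\item the identity \(\tilde\Phi_k(T)=\pi_{x_0}(\sum_{s\in F_kF_k^{-1}}c_sh_k(s)u_s)\);
\item closedness of \(\pi_{x_0}(\mathcal A)\).
\end{enumerate}
Alternatively, you could keep the Herz--Schur maps from your third paragraph, built from the original \(h_k\) pulled back to \(\ell^\infty(\Lambda)\). The error enters only when you replace them by the untwisted compression and change \(h_k\) to match it.
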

	
	\begin{proof}
		By topological amenability and \cite[Lemma~4.3.8]{BO08}, there are continuous maps \(m_i:X\longrightarrow\Prob(\Lambda)\), where \(\Prob(\Lambda)\) carries the weak-$\ast$ topology, and finite sets \(F_i\subset\Lambda\) such that \(\supp m_i(x)\subset F_i\) for every \(x\in X\) and, for every \(s\in\Lambda\),
		\[
		\sup_{x\in X}
		\|m_i(x)-s\mathbin{\cdot}m_i(s^{-1}x)\|_1
		\longrightarrow0,
		\qquad
		(s\mathbin{\cdot}p)(t)=p(s^{-1}t).
		\]
		Since \(m_i(X)\subset\Prob(F_i)\), weak-$\ast$ continuity of \(m_i\) is equivalent to continuity for the \(\ell^1\)-norm. Put \(\xi_i(x)=m_i(x)^{1/2}\). These are continuous unit-vector fields from \(X\) to \(\ell^2(\Lambda)\), with \(\supp\xi_i(x)\subset F_i\), and
		\[
		\sup_{x\in X}
		\|\xi_i(x)-\lambda_s\xi_i(s^{-1}x)\|_2^2
		\leq
		\sup_{x\in X}
		\|m_i(x)-s\mathbin{\cdot}m_i(s^{-1}x)\|_1
		\longrightarrow0
		\]
		for every \(s\in\Lambda\).
		
		For \(r\in\Lambda\), put \(v_i(r)=\lambda_{r^{-1}}\xi_i(rx_0)\) and \(k_i(r,t)=\langle v_i(r),v_i(t)\rangle\). The kernel \(k_i\) is positive definite, satisfies \(k_i(r,r)=1\), and is supported on \(\{(r,t)\in\Lambda\times\Lambda\mid rt^{-1}\in F_iF_i^{-1}\}\).
		Indeed, \(\operatorname{supp}v_i(r)\subset r^{-1}F_i\), so \(k_i(r,t)\neq0\) implies that \(r^{-1}a=t^{-1}b\) for some \(a,b\in F_i\), and hence \(rt^{-1}=ab^{-1}\in F_iF_i^{-1}\).
		By \cite[Theorem~D.3]{BO08}, Schur multiplication by \(k_i\) defines a completely positive contraction \(\Theta_i\) on \(\rB(\ell^2(\Lambda))\). Since \(k_i(r,r)=1\) for every \(r\in\Lambda\), the map \(\Theta_i\) is unital.
		
		For \(h_i(s,x)=\langle\xi_i(x),\lambda_s\xi_i(s^{-1}x)\rangle\), one has
		\begin{equation}\label{eq:itap-schur-coefficient}
			k_i(r,t)=h_i(rt^{-1},rx_0),
			\qquad
			\sup_{x\in X}|h_i(s,x)-1|\longrightarrow0
		\end{equation}
		for every fixed \(s\in\Lambda\), and \(h_i(s,\mathord\cdot)=0\) unless \(s\in F_iF_i^{-1}\).
		
		We first claim that \(\Theta_i(T)\to T\) in norm for every \(T\in\rC_u^*(\Lambda)\). Fix \(s\in\Lambda\) and \(b\in\ell^\infty(\Lambda)\). The operator \(M_b\lambda_s\) is supported on the \(s\)-diagonal, with \((r,t)\)-entry \(b(r)\) when \(r=st\), and zero otherwise.
		Schur multiplication and \eqref{eq:itap-schur-coefficient} therefore give
		\begin{align*}
			\langle\Theta_i(M_b\lambda_s)\delta_t,\delta_r\rangle
			&=k_i(r,t)\langle M_b\lambda_s\delta_t,\delta_r\rangle\\
			&=k_i(r,t)b(r)\mathbf 1_{\{r=st\}}\\
			&=b(r)h_i(s,rx_0)\mathbf 1_{\{r=st\}},
		\end{align*}
		because \(r=st\) implies \(rt^{-1}=s\). These are precisely the matrix coefficients of \(M_{b(\mathord\cdot)h_i(s,(\mathord\cdot)x_0)}\lambda_s\),
		so
		\[
			\Theta_i(M_b\lambda_s)
			=M_{b(\mathord\cdot)h_i(s,(\mathord\cdot)x_0)}\lambda_s.
		\]
		Consequently, if \(B=\sum_{s\in S}M_{b_s}\lambda_s\), where \(S\subset\Lambda\) is finite and \(b_s\in\ell^\infty(\Lambda)\) for every \(s\in S\), then
		\[
		\|\Theta_i(B)-B\|
		\leq
		\sum_{s\in S}\|b_s\|_\infty
		\sup_{x\in X}|h_i(s,x)-1|
		\longrightarrow0.
		\]
		The claim follows by density and contractivity.
		
		Now take \(T\in\rC_u^*(\Lambda)\cap\rL(\Lambda)\) and put \(c_s=\langle T\delta_e,\delta_s\rangle\). Since the matrix entries of \(T\) are constant on the right-invariant diagonals \(\Delta_s\), \eqref{eq:itap-schur-coefficient} gives
		\begin{align*}
			\langle\Theta_i(T)\delta_t,\delta_r\rangle
			&=k_i(r,t)\langle T\delta_t,\delta_r\rangle\\
			&=c_{rt^{-1}}h_i(rt^{-1},rx_0).
		\end{align*}
		The right-hand side has finite diagonal support. Since each \(h_i(s,\mathord\cdot)\) belongs to \(\rC(X)\), the finite Fourier polynomial \(a_i=\sum_{s\in F_iF_i^{-1}}c_sh_i(s,\mathord\cdot)u_s\) belongs to \(\mathcal A\). By Lemma~\ref{lem:itap-orbit-representation}, \(\pi_{x_0}(a_i)\) has the same matrix coefficients as \(\Theta_i(T)\). Indeed, only the coefficient indexed by \(s=rt^{-1}\) contributes, and both sides vanish when \(rt^{-1}\notin F_iF_i^{-1}\) because \(h_i(rt^{-1},\mathord\cdot)=0\). Hence \(\Theta_i(T)=\pi_{x_0}(a_i)\in\pi_{x_0}(\mathcal A)\).
		Together with the preceding norm convergence and closedness of \(\pi_{x_0}(\mathcal A)\), this proves \(\rC_u^*(\Lambda)\cap\rL(\Lambda)\subset\pi_{x_0}(\mathcal A)\cap\rL(\Lambda)\).
		The reverse inclusion follows from \(\pi_{x_0}(\mathcal A)\subset\rC_u^*(\Lambda)\).
	\end{proof}
	
	We can now formulate the exact scalar obstruction.
	
	\begin{proposition}\label{prop:scalar-itap-equivalence}
		Suppose that \(\Lambda\curvearrowright X\) is topologically amenable and has a dense orbit. The following are equivalent.
		\begin{enumerate}[\rm (i)]
			\item Every intermediate \(\rC^*\)-subalgebra \(\rC^*_{\lambda}(\Lambda)\subset\mathcal D\subset\rC(X)\rtimes_r\Lambda\) satisfying \(\rC^*(\rE(\mathcal D))=\C1\) is equal to \(\rC^*_{\lambda}(\Lambda)\).
			\item The group \(\Lambda\) has {\rm ITAP}.
		\end{enumerate}
	\end{proposition}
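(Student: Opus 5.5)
The plan is to transport the problem into \(\rB(\ell^2(\Lambda))\) via the faithful representation \(\pi_{x_0}\) of Lemma~\ref{lem:itap-orbit-representation}. There \(\pi_{x_0}\) restricts to the identity on \(\rC^*_\lambda(\Lambda)\), since \(\pi_{x_0}(u_s)=\lambda_s\). Set
\[
\mathcal D_0=\{a\in\mathcal A\mid \rE(au_s^*)\in\C1\text{ for every }s\in\Lambda\}.
\]
By \eqref{eq:itap-boundary-intersection} and Proposition~\ref{prop:amenable-orbit-saturation},
\[
\pi_{x_0}(\mathcal D_0)=\pi_{x_0}(\mathcal A)\cap\rL(\Lambda)=\rC_u^*(\Lambda)\cap\rL(\Lambda).
\]
Note that \(\mathcal D_0\) is a \(\rC^*\)-subalgebra of \(\mathcal A\). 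Indeed, it is the preimage under the faithful \(*\)-homomorphism \(\pi_{x_0}\) of the \(\rC^*\)-algebra \(\pi_{x_0}(\mathcal A)\cap\rL(\Lambda)\). It contains \(\rC^*_\lambda(\Lambda)\). Moreover \(\rE(\mathcal D_0)\subset\C1\), by the \(s=e\) coefficient, so \(\rC^*(\rE(\mathcal D_0))=\C1\).

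(i)\(\Rightarrow\)(ii). Apply (i) to \(\mathcal D=\mathcal D_0\) to get \(\mathcal D_0=\rC^*_\lambda(\Lambda)\). Applying \(\pi_{x_0}\) gives \(\rC_u^*(\Lambda)\cap\rL(\Lambda)=\pi_{x_0}(\rC^*_\lambda(\Lambda))=\rC^*_\lambda(\Lambda)\), which is ITAP.

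(ii)\(\Rightarrow\)(i). Let \(\mathcal D\) be intermediate with \(\rC^*(\rE(\mathcal D))=\C1\). The key step is to show \(\mathcal D\subset\mathcal D_0\). For \(d\in\mathcal D\) and \(s\in\Lambda\) we have \(du_s^*\in\mathcal D\), because \(u_s^*\in\rC^*_\lambda(\Lambda)\subset\mathcal D\). Hence \(\rE(du_s^*)\in\rE(\mathcal D)\subset\C1\). So every Fourier coefficient of \(d\) is scalar, and \(d\in\mathcal D_0\).

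It follows that
\[
\rC^*_\lambda(\Lambda)\subset\pi_{x_0}(\mathcal D)\subset\pi_{x_0}(\mathcal D_0)=\rC_u^*(\Lambda)\cap\rL(\Lambda)=\rC^*_\lambda(\Lambda),
\]
using ITAP at the last step. Faithfulness of \(\pi_{x_0}\), together with \(\pi_{x_0}(\rC^*_\lambda(\Lambda))=\rC^*_\lambda(\Lambda)\), then yields \(\mathcal D=\rC^*_\lambda(\Lambda)\).

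Unlike in the AP setting, no Fourier reconstruction (Proposition~\ref{prop:suzuki}) is needed, since the concrete representation identifies the scalar-coefficient elements directly. The only point requiring care, and the nearest thing to an obstacle, is that Proposition~\ref{prop:amenable-orbit-saturation} provides the upper set \(\rC_u^*(\Lambda)\cap\rL(\Lambda)\) entirely inside \(\pi_{x_0}(\mathcal A)\). This is exactly what makes \(\mathcal D_0\) a genuine intermediate subalgebra of \(\mathcal A\), rather than merely a subalgebra of \(\rB(\ell^2(\Lambda))\), in the direction (i)\(\Rightarrow\)(ii). All of it is already established, so the proof is essentially bookkeeping.
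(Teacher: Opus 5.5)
Your proof is correct and follows the paper's argument. The paper likewise identifies \(\mathcal A\) with \(\pi_{x_0}(\mathcal A)\) and takes \(\mathcal D_0=\mathcal A\cap\rL(\Lambda)\), which by Lemma~\ref{lem:itap-orbit-representation} is your scalar-coefficient algebra; it then shows that an intermediate \(\mathcal D\) has \(\rC^*(\rE(\mathcal D))=\C1\) exactly when \(\mathcal D\subset\mathcal D_0\), and uses Proposition~\ref{prop:amenable-orbit-saturation} to identify \(\mathcal D_0\) with \(\rC_u^*(\Lambda)\cap\rL(\Lambda)\), so (i) reduces to the single equality \(\mathcal D_0=\rC^*_\lambda(\Lambda)\) where you spell out the two implications separately.
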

	
	\begin{proof}
		Fix a point \(x_0\in X\) with dense orbit and identify \(\mathcal A\) with \(\pi_{x_0}(\mathcal A)\).
		Put \(\mathcal D_0=\mathcal A\cap\rL(\Lambda)\), which is an intermediate \(\rC^*\)-subalgebra. For any intermediate \(\rC^*\)-subalgebra \(\mathcal D\), Lemma~\ref{lem:itap-orbit-representation} gives \(\rC^*(\rE(\mathcal D))=\C1\Longleftrightarrow\mathcal D\subset\mathcal D_0\).
		Indeed, if the left-hand side holds, then \(du_s^*\in\mathcal D\) for \(d\in\mathcal D\) and \(s\in\Lambda\), so every Fourier coefficient \(\rE(du_s^*)\) is scalar and \(d\in\mathcal D_0\). Conversely, if \(\mathcal D\subset\mathcal D_0\), then \(\rE(\mathcal D)\subset\C1\), and unitality gives the left-hand side.
		
		Thus (i) is equivalent to \(\mathcal D_0=\rC^*_{\lambda}(\Lambda)\), since \(\mathcal D_0\) is itself an intermediate \(\rC^*\)-subalgebra. Proposition~\ref{prop:amenable-orbit-saturation} identifies
		\[
		\mathcal D_0
		=
		\rC_u^*(\Lambda)\cap\rL(\Lambda),
		\]
		so this equality is precisely {\rm ITAP}.
	\end{proof}
	
	\subsection{Consequence for the noncommutative Dani problem}
	
	Let \(G\) be a connected semisimple Lie group with trivial center and without compact factors, let \(P<G\) be a minimal parabolic subgroup, and let \(\Gamma<G\) be an irreducible lattice. Write \(\rE\) for the canonical expectation on \(\rC(G/P)\rtimes_r\Gamma\). The restricted action \(\Gamma\curvearrowright G/P\) is minimal and strongly proximal, hence a \(\Gamma\)-boundary \cite[p.~175, items~(5)--(6)]{Fu03}. Thus \(G/P\) is a \(\Gamma\)-equivariant quotient of the universal Furstenberg boundary \(\partial_F\Gamma\), and does not coincide with it. In particular, every \(\Gamma\)-orbit is dense. See also \cite[Lemma~8.5]{Mo73}. Moreover, this action is topologically amenable by \cite[Theorem~6.6]{Ka05} and \cite[Theorem~5.4.1]{BO08}. Proposition~\ref{prop:scalar-itap-equivalence} therefore applies.
	
	\begin{proof}[Proof of Theorem~\ref{thm:sl3-itap-obstruction}]
		Take \(G=\SL_3(\R)\), which has trivial center, let
		\(\Gamma=\SL_3(\Z)\), and put \(X=G/P\). The preceding
		paragraph and Proposition~\ref{prop:scalar-itap-equivalence} give the
		equivalence between ordinary {\rm ITAP} and the scalar-expectation assertion
		in Theorem~\ref{thm:sl3-itap-obstruction}.
		
		Now assume the full noncommutative Dani classification and let \(\mathcal D\)
		be an intermediate \(\rC^*\)-subalgebra with \(\rC^*(\rE(\mathcal D))=\C1\). There is a
		parabolic subgroup \(P\subset Q\subset G\) such that
		\(\mathcal D=\rC(G/Q)\rtimes_r\Gamma\). Since
		\(\rC(G/Q)\subset\mathcal D\), the scalar-expectation condition forces \(G/Q\)
		to be a point. Hence \(Q=G\) and
		\(\mathcal D=\rC^*_{\lambda}(\Gamma)\). The already proved equivalence gives
		{\rm ITAP}.
	\end{proof}
	
	Irreducibility is not needed for the analytic implication itself. It is the
	natural hypothesis in the Dani classification problem. By the equivalence above,
	{\rm ITAP} gives exactly the upper bound for intermediate \(\rC^*\)-subalgebras with scalar
	expectation. It does not by itself settle the other parabolic cases.
	
	The failure of {\rm(AP)} for \(\SL_3(\Z)\) \cite[Theorem~C]{LdlS11} does not decide ordinary {\rm ITAP}. It yields an operator-space fixed-point defect. By \cite[Corollary~1.8]{KU14}, there are a Hilbert space \(H\) and an operator space \(S\subset\mathcal K(H)\) such that, for the right adjoint action on the first tensor factor and the trivial action on \(S\),
	\[
	\bigl(\rC_u^*(\SL_3(\Z))\otimes_{\min} S\bigr)^{\SL_3(\Z)}
	\neq
	\rC_u^*(\SL_3(\Z))^{\SL_3(\Z)}\otimes_{\min} S.
	\]
	This coefficient-valued defect does not by itself imply failure of the scalar
	equality defining ordinary {\rm ITAP}. Thus failure of {\rm(AP)} leaves the scalar
	upper-bound case open, and Theorem~\ref{thm:sl3-itap-obstruction} identifies its
	exact remaining issue as ordinary {\rm ITAP}.
	
	The implication from {\rm(AP)} to {\rm ITAP} is due to Zacharias
	\cite[Theorem~3.2]{Za06}. For completeness, we give a short proof using Suzuki's
	Fourier reconstruction theorem. For the left-translation action identify
	\(\rC_u^*(\Gamma)=\ell^\infty(\Gamma)\rtimes_r\Gamma\), and take
	\(T\in\rC_u^*(\Gamma)\cap\rL(\Gamma)\). For \(s,r\in\Gamma\), one has
	\[
	\rE(Tu_s^*)(r)=\langle T\delta_{s^{-1}r},\delta_r\rangle.
	\]
	Since
	\(T\in\rL(\Gamma)\), the right-hand side depends only on
	\(r(s^{-1}r)^{-1}=s\), and hence every Fourier coefficient of \(T\) is scalar.
	Proposition~\ref{prop:suzuki}, applied to
	\(\C1\subset\ell^\infty(\Gamma)\), places \(T\) in
	\(\C1\rtimes_r\Gamma=\rC^*_{\lambda}(\Gamma)\). The reverse inclusion is
	automatic. In particular, this applies to the rank-one product lattices of
	Theorem~\ref{thm:classification}.

	% Bibliography labels use publication years.

\end{document}